\newcommand{\R}{\mathbb{R}}
\newcommand{\N}{\mathbb{N}}
\newcommand{\bs}{\boldsymbol}
\newcommand{\mc}{\mathcal}
\newcommand{\E}{\mathbb{E}}
\renewcommand{\Pr}{\mathbb{P}}
\theoremstyle{definition}
\newtheorem{corollary}{Corollary}
\newtheorem{lemma}{Lemma}
\newtheorem{proposition}{Proposition}
\newtheorem{definition}{Definition}
\newtheorem{example}{Example}
\theoremstyle{remark}
\begin{document}

\title{Constant Rate Distributions on Partially Ordered Sets}

\author{Kyle Siegrist\\
Department of Mathematical Sciences \\
University of Alabama in Huntsville\\
siegrist@math.uah.edu}

\maketitle

\begin{abstract}
We consider probability distributions with constant rate on partially ordered sets, generalizing distributions in the usual reliability setting $([0, \infty), \le)$ that have constant failure rate.  In spite of the minimal algebraic structure, there is a surprisingly rich theory, including
moment results and results concerning ladder variables and point processes.  We concentrate mostly on discrete posets, particularly posets whose graphs are rooted trees.  We pose some questions on the existence of constant rate distributions for general discrete posets.

\vspace{5 pt}
\noindent {\bf Keywords}: constant rate distribution, partially ordered set, positive semigroup, ladder variable, tree

\vspace{5 pt}
\noindent {\bf AMS Subject Classification}: Primary 60B99; Secondary 60B15
\end{abstract}

\section{Preliminaries}

\subsection{Introduction}

The exponential distribution on $[0, \infty)$ and the geometric distribution on $\N$ are characterized by the {\em constant rate} property:  the density function (with respect to Lebesgue measure in the first case and counting measure in the second) is a multiple of the upper (right-tail) distribution function.  

The natural mathematical home for the constant rate property is a partially ordered set (poset) with a reference measure for the density functions.  In this paper we explore these distributions. In spite of the minimal algebraic structure of a poset, there is a surprisingly rich theory, including moment results and results concerning ladder variables and point processes.  In many respects, constant rate distributions lead to the most random way to put ordered points in the poset.  We will be particularly interested in the existence question---when does a poset support constant rate distributions?

\subsection{Standard posets}

Suppose that $(S, \preceq)$ is a poset. For $x \in S$, let
\[
	I(x) = \{t \in S \colon t \succ x\}, \; I[x] = \{t \in S \colon t \succeq x\}, \; D[x] = \{t \in S \colon t \preceq x\}
\]
For $n \in \N$ and $x \in S$ we also define
\begin{align*}
	D_n &= \{(x_1, x_2, \ldots, x_n) \in S^n: x_1 \preceq x_2 \preceq \cdots \preceq x_n\} \\
	D_n[x] &= \{(x_1, x_2, \ldots, x_n) \in S^n: x_1 \preceq x_2 \preceq \cdots \preceq x_n \preceq x \}
\end{align*}
Recall also that $A \subseteq S$ is {\em convex} if 
\[
a \in A, \, b \in A, \, a \preceq x \preceq b \Rightarrow x \in A
\]
For $x, \, y \in S$, $y$ is said to {\em cover} $x$ if $y$ is a minimal element of $I(x)$. If $S$ is countable, the {\em Hasse graph} or {\em covering graph} of $(S, \preceq)$ has vertex set $S$ and (directed) edge set $\{(x, y) \in S^2 : y \text { covers } x\}$. We write $x \perp y$ if $x \preceq y$ or $y \preceq x$ and we write $x \parallel y$ if neither $x \preceq y$ nor $y \preceq x$. The poset $(S, \preceq)$ is {\em connected} if for every $x, \, y \in S$, there exists a finite sequence $(x_0, x_1, \ldots, x_n)$ such that $x_0 = x$, $x_n = y$, and $x_{i-1} \perp x_i$ for $i = 1, \ldots, n$.

We assume that $S$ has a topology that is compatible with the algebraic structure, in the sense of the following properties
\begin{enumerate}
	\item $S$ is locally compact, locally convex, and has a countable base.
	\item $D_2$ is closed in $S^2$ (with the product topology).
	\item $D[x]$ is compact for each $x \in S$.
\end{enumerate}
The Borel $\sigma$-algebra will be denoted by $\mathcal{B}(S)$.  The first assumptions means that the partially ordered set $(S, \, \preceq)$ is topologically closed in the sense of Nachbin \cite{Nachbin}.  In particular, this assumption implies that $I[x]$ and $D[x]$ are closed for each $x$, and that $S$ is Hausdorff. Hence, if $S$ is countable, then $\mathcal{B}(S)$ is the power set of $S$.  Conversely, suppose that $(S, \, \preceq)$ is a poset and that $S$ is countable.  If $D[x]$ is finite for each $x \in S$ (so that $(S, \, \preceq)$ is {\em locally finite}), then with the discrete topology, $(S, \, \preceq)$ satisfies the topological assumptions.

Next, we fix a positive Borel measure $\lambda$ on $\mathcal{B}(S)$ as a {\em reference measure}. Thus $\lambda(K) < \infty$ for compact $K \subseteq S$. We also assume that $\lambda$ that has support $S$, so that $\lambda(U) > 0$ for every open set $U$. Because of the topological assumptions, $\lambda$ is $\sigma$-finite and is regular. When $S$ is countable, we always take the reference measure to be counting measure.

\begin{definition}
The triple $(S, \preceq, \lambda)$ satisfying the algebraic, topological, and measure-theoretic assumptions will be called a {\em standard poset}.
\end{definition}

An important special case is the poset associated with a {\em positive semigroup}.  A positive semigroup $(S, \cdot)$ is a semigroup that has an identity element $e$, satisfies the left-cancellation law, and has no non-trivial inverses.  The partial order $\preceq$ associated with $(S, \cdot)$ is given by
\[
x \preceq y \text{ if and only if } xt = y \text{ for some } t \in S
\]
If $x \preceq y$, then $t$ satisfying $x t = y$ is unique, and is denoted $x^{-1} y$. In addition to the topological assumptions given above, we assume that $(x, y) \to xy$ is continuous.  We also assume that the reference measure $\lambda$ is {\em left-invariant}, that is 
\[
\lambda(xA) = \lambda(A), \quad x \in S, \, A \in \mc{B}(S)
\]
The canonical examples are $([0, \infty), +)$ with $\lambda$ as Lesbesgue meausre, and $(\N, +)$ (with counting measure of course).  In both cases, the associated order is the ordinary order $\le$. Probability distributions (particularly exponential type distributions) on positive semigroups have been studied in \cite{Rowell, Siegrist, Siegrist2, Siegrist3, Siegrist4}. The critical feature of a positive semigroup is that for each $x$, $I[x] = xS$ looks like the entire space $S$ from algebraic, topological, and measure-theoretic points of view.  Of course, there is no such self-similarity for general standard posets.

For the remainder of this article, unless otherwise noted, we assume that $(S, \preceq, \lambda)$ is a standard poset.

\subsection{Operators and cumulative functions}

Let $\mc{D}(S)$ denote the set of measurable functions from $S$ into $\R$ that are bounded on the compact set $D[x]$ for each $x \in S$. Define the {\em lower operator} $L$ on $\mc{D}(S)$ by
\[
Lf(x) = \int_{D[x]} f(t) d\lambda(t), \quad x \in S
\]
Next, let $\mc{L}(S)$ denote the usual Banach space of measurable functions $f: S \to \R$, with $||f|| = \int_S |f(x)| d\lambda(x) < \infty$. Define  the {\em upper operator} $U$ on $\mc{L}(S)$ by
\[
Uf(x) = \int_{I[x]} f(y) d\lambda(y), \quad x \in S
\]
A simple application of Fubinni's theorem gives the following duality relationship between the linear operators $L$ and $U$:
\begin{equation}
\int_S Lf(x) g(x) d\lambda(x) = \int_S f(x) Ug(x) d\lambda(x) \label{eq.dual}
\end{equation}
assuming, of course, the appropriate integrability conditions. Both operators can be written as integral operators with a kernel function.  Define $r: S \times S \to \R$ by 
\[r(x, y) = \begin{cases}
1 &\text{ if } x \preceq y \\
0 &\text{ otherwise}
\end{cases}\]
Then
\begin{align*}
Lf(x) &= \int_S r(t, x) f(t) d\lambda(t), \quad x \in S\\
Uf(x) &= \int_S r(x, t) f(t) d\lambda(t), \quad x \in S
\end{align*}
In the discrete case, $r$ is the {\em Riemann function} in the terminology of M\"obius inversion \cite{Berge}, and its inverse $m$ (also in the sense of this theory) is the {\em M\"obius function}.  The lower operator $L$ is invertible, and if $g = Lf$ then
\[
f(x) = \sum_{t \in S} g(t) m(t, x)
\]
As we will see in Example \ref{ex.nonUnique}, the upper operator $U$ is not invertible in general, even in the discrete case.

Now let $\bs{1}$ denote the constant function $1$ on $S$, and define $\lambda_n: S \to [0, \infty)$ by $\lambda_n = L^n\bs{1}$ for $n \in \N$. Equivalently, $\lambda_n(x) = \lambda^n(D_n[x])$ for $n \in \N_+$ and $x \in S$, where $\lambda^n$ is $n$-fold product measure on $S^n$. We will refer to $\lambda_n$ as the {\em cumulative function of order $n$}; these functions play an important role in the study of probability distribution on $(S, \preceq)$. For the poset $([0, \infty), \le)$, the cumulative functions are
\[
\lambda_n(x) = \frac{x^n}{n!}, \quad x \in [0, \infty), \, n \in \N
\]
For the poset $(\N, \le)$, the cumulative functions are
\begin{equation}
\lambda_n(x) = \binom{n+x}{x}, \quad x, n \in \N \label{eq.cumulativeN}
\end{equation}
Other examples, for positive semigroups, are given in \cite{Siegrist4}.

The ordinary generating function of $n \mapsto \lambda_n(x)$ is the function $\Lambda$ given by
\[ \Lambda(x, t) = \sum_{n=0}^\infty \lambda_n(x) t^n \]
for $x \in S$ and for $t \in \R$ for which the series converges absolutely. The generating function $\Lambda$ is important in the study of the point process associated with a constant rate distribution. For the poset $([0, \infty), \le)$, $\Lambda(x, t) = e^{t x}$. For the poset $(\N, \le)$, 
\begin{equation} \label{eq.GF2}
\Lambda(x, t) = \frac{1}{(1 - t)^{x + 1}}, \quad x \in \N, \, |t| < 1
\end{equation}

\section{Probability Distributions}

\subsection{Distribution functions}

Suppose now that $X$ is a random variable taking values in $S$. We will assume that the distribution of $X$ is a Borel probability measure and that $X$ has support $S$. The probability density function (PDF) of $X$, if it exists, refers to the density with respect the reference measure $\lambda$.

\begin{definition}
The {\em upper probability function} (UPF) of $X$ is the function $F: S \to (0, 1]$ given by
\[
F(x) = \Pr(X \succeq x) = \Pr(X \in I[x]), \quad x \in S
\]
\end{definition}

If $X$ has a PDF $f$ (which is always the case when $S$ is discrete) then
\[
F(x) = Uf(x) = \int_{I[x]} f(t) d\lambda(t), \quad x \in S
\]
In general, the UPF of $X$ does not uniquely determine the distribution of $X$. 

\begin{example} \label{ex.nonUnique}
Let $A$ be fixed set with $k$ elements ($k \geq 2$), and let $(S, \preceq)$ denote the lexicographic sum of the anti-chains $(A_n, =)$ over $(\N, \leq)$, where $A_0 = \{e\}$ and $A_n = A$ for $n \in \N_+$. Thus, $(0, e)$ is the minimum element of $S$ and for $n, \, m \in \N_+$ and $x, \, y \in A$, $(m, x) \prec (n, y)$ if and only if $n < m$. 

Now let $f$ be a PDF on $S$ with UPF $F$.  Define $g$ by 
\[ g(n, x) = f(n, x) + \left(-\frac{1}{k-1}\right)^n c \]
where $c$ is a constant. It is straightforward to show that
\[
\sum_{(m,y) \succeq (n,x)} g(m,y) = F(n, x), \quad (n, x) \in S
\]
In particular, $\sum_{(x, n) \in S} g(n, x) = 1$.  Hence if we can choose a PDF $f$ and a nonzero constant $c$ so that $g(n, x) > 0$ for every $(n, x) \in S$, then $g$ is a PDF different from $f$ but with the same UPF. This can always be done when $k \ge 3$. 
\end{example}

In the discrete case, a simple application of the inclusion-exclusion rule shows that the distribution of $X$ {\em is} determined by the generalized UPF, defined for finite $A \subseteq S$ by 
\[
F(A) = \Pr(X \succeq x \text{ for all } x \in A)
\]
An interesting problem is to give conditions on the poset $(S, \preceq)$ that ensure that a distribution on $S$ is determined by its ordinary UPF.  This holds for a discrete upper semilattice, since $F(A) = F(\sup(A))$ for $A$ finite.  It also holds for trees, as we will see in Section \ref{s.trees}.

\begin{definition}
Suppose that $X$ has UPF $F$ and PDF $f$. The {\em rate function} of $X$ is the function $r \colon S \to (0, \infty)$ defined by
\[
r(x) = \frac{f(x)}{F(x)}, \quad x \in S
\]
\end{definition}

If the poset is $([0,\,\infty), \leq)$, the rate function is the ordinary {\em failure rate function}; the value at $x$ gives the probability density of failure at $x$ given survival up to $x$. Of course, in the general setting of posets, the reliability interpretation does not have its usual meaning; we are using ``rate'' simply as a convenient mathematical term. Note that for a discrete poset, $r(x) = \Pr(X = x | X \succeq x)$. In general, $X$ has {\em constant rate} if $r$ is constant on $S$, {\em increasing rate} if $r$ is increasing on $S$ and {\em decreasing rate} if $r$ is decreasing on $S$. We are particularly interested in distributions with constant rate; these will be studied in Section \ref{s.rate}. 

\begin{proposition} \label{p.rate0}
Suppose that $(S, \preceq)$ is a standard discrete poset, and that $X$ has rate function $r$.  Then $r(x) \leq 1$ for $x \in S$ and $r(x) = 1$ if and only if $x$ is maximal.
\end{proposition}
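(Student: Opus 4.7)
The plan is to exploit the identity $r(x) = \Pr(X = x \mid X \succeq x)$, noted in the paragraph preceding the proposition, which holds in the discrete case because $f(x) = \Pr(X = x)$ and $F(x) = \Pr(X \in I[x])$. Since $\{X = x\} \subseteq \{X \succeq x\}$, any conditional probability is bounded by $1$, giving the first claim immediately.

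For the equality case, I would decompose $I[x] = \{x\} \cup I(x)$ as a disjoint union in the discrete setting, so that
\[
F(x) = f(x) + \Pr(X \in I(x)).
\]
Therefore $r(x) = 1$ if and only if $f(x) = F(x)$, if and only if $\Pr(X \in I(x)) = 0$. Now I invoke the standing hypothesis that $X$ has full support $S$: in the discrete case this means $f(y) > 0$ for every $y \in S$, so $\Pr(X \in I(x)) = \sum_{y \succ x} f(y) = 0$ happens exactly when $I(x) = \emptyset$, which is the definition of $x$ being maximal.

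There is essentially no obstacle here; the only thing to be careful about is making sure the support assumption is explicitly used for the ``only if'' direction of the characterization of maximality, since without it one could have $\Pr(X \in I(x)) = 0$ at a non-maximal $x$ whose strict up-set happens to carry no mass. Everything else is a one-line unpacking of definitions.
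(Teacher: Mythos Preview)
Your proof is correct and follows essentially the same route as the paper: both decompose $F(x) = f(x) + \Pr(X \succ x)$ and use the full-support assumption to handle the equality case. The only cosmetic difference is that the paper rewrites this as $\Pr(X \succ x) = [1 - r(x)]F(x)$ (an identity it labels for later use), whereas you argue directly via the conditional-probability interpretation.
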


\begin{proof}
Let $f$ denote the PDF of $X$ and $F$ the UPF. For $x \in S$,
\[
F(x) = f(x) + \Pr(X \succ x) = r(x) F(x) + \Pr(X \succ x)
\]
Hence
\begin{equation} \label{eq.rate0}
\Pr(X \succ x) = [1 - r(x)]F(x)
\end{equation}
By (\ref{eq.rate0}), $r(x) \leq 1$ for all $x \in S$. If $x$ is maximal, then $\Pr(X \succ x) = \Pr(\emptyset) = 0$ so $r(x) = 1$. Conversely, if $x$ is not maximal then $\Pr(X \succ x) > 0$ (since $X$ has support $S$) and hence $r(x) < 1$.
\end{proof}

The following proposition gives a simple result that relates expected value to the lower operator $L$. For positive semigroups, this result was given (in different notation) in \cite{Siegrist4}.

\begin{proposition} \label{p.expect}
Suppose that $X$ has UPF $F$. For $g \in \mc{D}(S)$ and $n \in \N$,
\begin{equation}
\int_S L^ng(x) F(x) d\lambda(x) = \E[L^{n+1}g(X)] \label{eq.expect}
\end{equation}
\end{proposition}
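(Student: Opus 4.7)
The proposition is essentially a restatement of the Fubini duality (\ref{eq.dual}) between $L$ and $U$ at the level of iterates, and my plan is to reduce it to that duality. First I would unwrap
\[
L^{n+1}g(X) = L(L^n g)(X) = \int_{D[X]} L^n g(t)\, d\lambda(t) = \int_S L^n g(t)\, \mathbf{1}[t \preceq X]\, d\lambda(t),
\]
take expectation on both sides, and swap $\E$ with the $\lambda$-integral by Fubini. The inner expectation collapses to $\E(\mathbf{1}[t \preceq X]) = \Pr(X \succeq t) = F(t)$, yielding the right-hand side of (\ref{eq.expect}).

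An equivalent and slightly cleaner route is available whenever $X$ has a PDF $f$ (which is automatic in the discrete case, and which the paper tacitly allows in the continuous case). Setting $h = L^n g$, the duality (\ref{eq.dual}) applied to the pair $(h, f)$ gives
\[
\E[L^{n+1}g(X)] = \int_S Lh(x) f(x)\, d\lambda(x) = \int_S h(x) Uf(x)\, d\lambda(x) = \int_S L^n g(x) F(x)\, d\lambda(x),
\]
since $F = Uf$. I would present the proof in this form, since it makes the statement look like a tautological extension of the $L$--$U$ duality to one extra application of $L$.

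The only delicate point, and hence the main obstacle, is integrability. We need $L^k g$ to be pointwise defined for $k \le n+1$, and we need Fubini to apply. Here the hypothesis $g \in \mathcal{D}(S)$ propagates through $L$: if $z \preceq y$ then $D[z] \subseteq D[y]$, and compactness of $D[y]$ gives $\lambda(D[y]) < \infty$, so a routine induction shows $\sup_{z \in D[y]} |L^k g(z)| \le \|g\|_{\infty, D[y]}\, \lambda(D[y])^k$. Thus $L^k g \in \mathcal{D}(S)$ for all $k$, the integrand $L^n g(t)\, \mathbf{1}[t \preceq X]$ is $\lambda$-integrable pointwise in $X$, and the swap of integrals is legitimate under the same tacit integrability proviso stated after (\ref{eq.dual}). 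The rest is bookkeeping.
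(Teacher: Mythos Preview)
Your proof is correct and follows essentially the same route as the paper: write $F(x) = \E[\bs{1}(X \succeq x)]$, swap $\E$ and the $\lambda$-integral via Fubini, and recognize the inner integral as $L^{n+1}g(X)$. The alternative route through (\ref{eq.dual}) when $X$ has a PDF is also exactly what the paper notes immediately after its proof, and your remarks on why $L^k g \in \mc{D}(S)$ are a welcome sharpening of the integrability proviso that the paper leaves tacit.
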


\begin{proof} 
Using Fubinni's theorem,
\begin{align*}
	\int_S L^ng(x) \Pr(X \succeq x) d\lambda(x) &= \int_S L^ng(x) \E[\bs{1}(X \succeq x)] d\lambda(x) \\
	&= \E \left(\int_{D[X]} L^n(g)(x) d\lambda(x) \right) \\
	&= \E[L^{n+1}g(X)]
\end{align*}
\end{proof}

When $X$ has a PDF $f$, (\ref{eq.expect}) also follows from (\ref{eq.dual}). In particular, letting $g = \bs{1}$ gives
\begin{equation} \label{eq.expect1}
\int_S \lambda_n(x) F(x) d\lambda(x) = \E[\lambda_{n+1}(X)], \quad n \in \N
\end{equation}
and when $n = 0$, (\ref{eq.expect1}) becomes
\begin{equation} \label{eq.expect2}
\int_S F(x) d\lambda(x) = \E(\lambda (D[X]))
\end{equation}
If the poset is $(\mathbf{N}, \le)$, then (\ref{eq.expect2}) reduces to the standard result
\[
\sum_{n=0}^\infty \Pr(X \ge n) = \E(X) + 1
\]
If the poset is $([0, \, \infty), \, \le)$, then (\ref{eq.expect2}) reduces to the standard result
\[
\int_0^\infty \Pr(X \ge x) \, dx = \E(X)
\]

\subsection{Ladder variables and partial products} \label{ss.ladder1}

Let $\bs{X} = (X_1, X_2, \ldots)$ be a sequence of independent, identically distributed random variables, taking values in $S$, with common UPF $F$ and PDF $f$. We define the sequence of {\em ladder variables} $\bs{Y} = (Y_1, Y_2, \ldots)$ associated with $\bs{X}$ as follows: First let
\[
N_1 = 1, \; Y_1 = X_1
\]
and then recursively define
\[
N_{n+1} = \min\{n > N_n: X_n \succeq Y_n\}, \; Y_{n+1} = X_{N_{n+1}}
\]

\begin{proposition} \label{p.ladder1}
The sequence $\bs{Y}$ is a homogeneous Markov chain with transition density $g$ given by
\[
g(y, z) = \frac{f(z)}{F(y)}, \quad (y, z) \in D_2
\]
\end{proposition}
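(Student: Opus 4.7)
The plan is to reduce the claim to the classical ``rejection sampling'' picture: conditional on the current ladder value $Y_n = y$, the process searches through the remaining i.i.d.\ observations $X_{N_n+1}, X_{N_n+2}, \dots$ until one falls in $I[y]$, and the first such observation has conditional density $f(z)/F(y)$ on $I[y]$. Formally, I would work with the natural filtration $\mc{F}_n = \sigma(Y_1, \dots, Y_n, N_1, \dots, N_n)$ and show that for any measurable $B \subseteq S$,
\[
\Pr(Y_{n+1} \in B \mid \mc{F}_n) = \int_B \frac{f(z)}{F(Y_n)} \mathbf{1}_{I[Y_n]}(z)\, d\lambda(z).
\]

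First I would fix an event $\{Y_1 = y_1, \dots, Y_n = y_n,\, N_n = k\}$ on which to condition. The key observation is that $(Y_1,\dots,Y_n,N_1,\dots,N_n)$ is a measurable function of $(X_1, \dots, X_k)$, so by the i.i.d.\ assumption the tail sequence $(X_{k+1}, X_{k+2}, \dots)$ is independent of $\mc{F}_n$ and still i.i.d.\ with PDF $f$. Next, $N_{n+1} - k$ equals the first index $j \geq 1$ with $X_{k+j} \succeq y_n$, so it is geometric with success probability $F(y_n) = \Pr(X \succeq y_n) > 0$ (positivity uses that $X$ has support $S$). A standard computation then gives, for $B \subseteq I[y_n]$ measurable,
\[
\Pr(Y_{n+1} \in B \mid \mc{F}_n) = \sum_{j=1}^{\infty} [1 - F(y_n)]^{j-1} \int_B f(z)\, d\lambda(z) = \frac{1}{F(y_n)} \int_B f(z)\, d\lambda(z),
\]
and since $Y_{n+1} \succeq Y_n$ almost surely, we can drop the restriction $B \subseteq I[y_n]$ by using the kernel $g(y,z) = f(z)/F(y)$ on $D_2$.

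Since the right-hand side depends on $\mc{F}_n$ only through $Y_n$, the Markov property and the stated form of the transition density follow simultaneously; homogeneity is immediate because the rejection-sampling argument does not depend on $n$. I expect the main technical obstacle to be writing out the independence step cleanly: one must verify that the partition of $\{Y_n = y_n\}$ by the values of $N_n$ is well-defined and measurable, and that the tail sequence really is conditionally i.i.d.\ with law $f$ given $\mc{F}_n$. This is essentially the strong Markov property for i.i.d.\ sequences and stopping times $N_n$, and once it is in place the geometric/rejection computation is routine.
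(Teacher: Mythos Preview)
Your proposal is correct and follows the same rejection-sampling idea as the paper: conditional on $Y_n = y$, one waits through the remaining i.i.d.\ observations until the first one lands in $I[y]$, and that observation has the conditional law of $X$ given $X \succeq y$, with density $f(z)/F(y)$. The paper's proof states this in one informal paragraph, whereas you have spelled out the filtration, the strong-Markov/independence step, and the explicit geometric summation; the added rigor is welcome but the underlying argument is identical.
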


\begin{proof}
Let $(y_1, \ldots, y_{n-1}, y, z) \in D_{n+1}$.  The conditional distribution of $Y_{n+1}$ given $\{Y_1 = y_1, \ldots, Y_{n-1} = y_{n-1}, Y_n = y\}$ corresponds to observing independent copies of $X$ until a variable occurs with a value greater that $y$ (in the partial order).  The distribution of this last variable is the same as the conditional distribution of $X$ given $X \succeq y$, which has density $z \mapsto f(z) / F(y)$ on $I[y]$.
\end{proof}

Since $Y_1 = X_1$ has PDF $f$, it follows immediately from Proposition \ref{p.ladder1} that $(Y_1, Y_2, \ldots, Y_n)$ has PDF $g_n$ (with repsect to $\lambda^n$) given by
\[
g_n(y_1, y_2, \ldots, y_n) = f(y_1) \frac{f(y_2)}{F(y_1)} \cdots \frac{f(y_n)}{F(y_{n-1})}, \quad (y_1, y_2, \ldots, y_n) \in D_n
\]
This PDF has a simple representation in terms of the rate function $r$:
\begin{equation}
g_n(y_1, y_2, \ldots, y_n) = r(y_1) r(y_2) \cdots r(y_{n-1}) f(y_n), \quad (y_1, y_2, \ldots, y_n) \in D_n \label{eq.jointPDF}
\end{equation}

Suppose now that $(S, \cdot, \lambda)$ is a standard positive semigroup, and that $\bs{X} = (X_1, X_2, \ldots)$ is an IID sequence in $S$ with PDF $f$. Let $Z_n = X_1 \cdots X_n$ for $n \in \N_+$, so that $\bs{Z} = (Z_1, X_2 \ldots)$ is the {\em partial product} sequence associated with $\bs{Z}$.

\begin{proposition} \label{p.product1}
The sequence $\bs{Z}$ is a homogeneous Markov chain with transition probability density $h$ given by
\[
h(y, z) = f(y^{-1} z), \quad (y, z) \in D_2
\]
\end{proposition}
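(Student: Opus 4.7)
The plan is to exploit the recursion $Z_{n+1} = Z_n \cdot X_{n+1}$ together with the independence of the $X_i$'s and the left-invariance of $\lambda$. Since $Z_{n+1}$ is a function of $(Z_n, X_{n+1})$ and $X_{n+1}$ is independent of $(X_1, \ldots, X_n)$ and hence of $(Z_1, \ldots, Z_n)$, the Markov property is automatic; moreover the conditional distribution of $Z_{n+1}$ given $Z_n = y$ is the distribution of $y X_{n+1}$, which depends on $y$ only through the left-translation map $x \mapsto yx$. So the chain is homogeneous once we identify this conditional density, and $h$ will not depend on $n$.

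For the density itself, first I would observe that $y X_{n+1} \in yS = I[y]$ almost surely, so the conditional distribution of $Z_{n+1}$ given $Z_n = y$ is supported on $I[y]$, which matches the support $\{z : (y,z) \in D_2\}$. To compute the density, I would take any measurable $A \subseteq I[y]$ and write
\[
\Pr(y X_{n+1} \in A) = \Pr(X_{n+1} \in y^{-1} A) = \int_{y^{-1}A} f(x) \, d\lambda(x).
\]
Here I use that on $I[y]$ the map $z \mapsto y^{-1}z$ is a well-defined inverse of left-translation by $y$, guaranteed by the left-cancellation law and the definition of $y^{-1}z$ in the excerpt. Changing variables $x = y^{-1}z$, so that $z = yx$ ranges over $A$, and invoking left-invariance of $\lambda$ in the form $\int_{y^{-1}A} \varphi(x)\, d\lambda(x) = \int_A \varphi(y^{-1}z)\, d\lambda(z)$, I obtain
\[
\Pr(y X_{n+1} \in A) = \int_A f(y^{-1}z) \, d\lambda(z),
\]
which identifies the transition density as $h(y,z) = f(y^{-1}z)$ on $D_2$.

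The only subtle step is the change-of-variables identity $\int_{y^{-1}A} \varphi\, d\lambda = \int_A \varphi \circ (y^{-1} \cdot)\, d\lambda$ for $A \subseteq yS$, which I would justify by the standard machinery: verify it for indicators via $\lambda(y^{-1}B) = \lambda(B)$ (a direct consequence of left-invariance applied to $y^{-1}B$, using that $y(y^{-1}B) = B$ when $B \subseteq yS$), then extend by linearity and monotone convergence. This is the only place where the semigroup structure and left-invariance really enter; everything else is a routine consequence of independence.
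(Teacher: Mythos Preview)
Your argument is correct. The paper itself states this proposition without proof, treating it as a standard fact about random walks on a positive semigroup; what you have written is exactly the natural argument one would supply---the recursion $Z_{n+1} = Z_n X_{n+1}$ together with independence of $X_{n+1}$ from $(Z_1,\ldots,Z_n)$ gives the homogeneous Markov property, and left-invariance of $\lambda$ converts $\Pr(yX_{n+1}\in A)=\int_{y^{-1}A} f\,d\lambda$ into $\int_A f(y^{-1}z)\,d\lambda(z)$, identifying the transition density.
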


Since $Z_1 = X_1$ has PDF $f$, it follows immediately from Proposition \ref{p.product1} that $(Z_1, Z_2, \ldots, Z_n)$ has PDF $h_n$ (with respect to $\lambda^n$) given by
\[
h_n(z_1, z_2, \ldots, z_n) = f(z_1) f(z_1^{-1} z_2) \cdots f(z_{n-1}^{-1} z_n), \quad (y_1, y_2, \ldots, y_n) \in D_n
\]
So in the case of a standard positive semigroup, there are two natural processes associated with an IID sequence $\bs{X}$: the sequence of ladder variables $\bs{Y}$ and the partial product sequence $\bs{Z}$.  In general, these sequences are not equivalent, but as we will see in Section \ref{s.rate}, {\em are} equivalent when the underlying distribution of $\bs{X}$ has constant rate. Moreover, this equivalence characterizes constant rate distributions.

Return now to the setting of a standard poset $(S, \preceq \lambda)$. If $\bs{W} = (W_1, W_2, \ldots)$ is an increasing sequence of random variables in $S$ (such as a sequence of ladder variables, or in the special case of a positive semigroup, a partial product sequence), we can construct a point process in the usual way. For $x \in S$, let
\[
N_x = \#\{n \in \N_+: W_n \preceq x\}
\]
so that $N_x$ is the number of random points in $D[x]$. We have the usual inverse relation between the processes $\bs{W}$ and $\bs{N} = (N_x: x \in S)$, namely $W_n \preceq x$ if and only if $N_x \geq n$ for $n \in \N_+$ and $x \in S$. For $n \in \N_+$, let $G_n$ denote the {\em lower} probability function of $W_n$, so that $G_n(x) = \Pr(W_n \preceq x)$ for $x \in S$. Then $\Pr(N_x \geq n) = \Pr(W_n \preceq x) = G_n(x)$ for $ n \in \N_+$. Of course, $\Pr(N_x \geq 0) = 1$.  If we define $G_0(x) = 1$ for all $x \in S$, then for fixed $x$, $n \mapsto G_n(x)$ is the {\em upper} probability function of $N_x$. 

The sequence of random points $\bs{W}$ can be {\em thinned} in the usual way. Specifically, suppose that each point is accepted with probability $p \in (0, 1)$ and rejected with probability $1 - p$, independently from point to point.  Then the first excepted point is $W_M$ where $M$ is independent of $\bs{W}$ and has the geometric distribution on $\N_+$ with parameter $p$.

\section{Distributions with Constant Rate} \label{s.rate}

\subsection{Characterizations and properties}

Suppose that $X$ is a random variable taking values in $S$ with UPF $F$. Recall that $X$ has {\em constant rate} $\alpha > 0$ if $f = \alpha F$ is a PDF of $X$. 

If $(S, \preceq)$ is associated with a positive semigroup $(S, \cdot)$, then $X$ has an {\em exponential} distribution if 
\begin{equation*}
\Pr(X \in xA) = F(x) \Pr(X \in A), \quad x \in S, A \in \mc{B}(S)
\end{equation*}
Equivalently, the conditional distribution of $x^{-1}X$ given $X \succeq x$ is the same as the distribution of $X$.  Exponential distributions on positive semigroups are studied in \cite{Rowell, Siegrist, Siegrist2, Siegrist3, Siegrist4}. In particular, it is shown in \cite{Siegrist} that a distribution is exponential if and only if it has constant rate and
\[
F(x y) = F(x) F(y), \quad x, \, y \in S
\]
However there are constant rate distributions that are not exponential. Moreover, of course, the exponential property makes no sense for a general poset.

Constant rate distributions can be characterized in terms of the upper operator $U$ or the lower operator $L$. In the first case, the characterization is an eigenvalue condition and in the second case a moment condition.

\begin{proposition}
The poset $(S, \preceq, \lambda)$ supports a distribution with constant rate $\alpha$ if and only if there exists a strictly positive $G \in \mc{L}(S)$ with 
\begin{equation}
U(G) = \frac{1}{\alpha}G \label{eq.Eigenvalue}
\end{equation}
\end{proposition}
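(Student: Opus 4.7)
The plan is to show that the eigenvalue equation \eqref{eq.Eigenvalue} is essentially a restatement of the definition of constant rate, once one remembers that $F = Uf$.

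For the forward direction, I would suppose that $X$ has constant rate $\alpha$, with PDF $f$ and UPF $F$, so by definition $f = \alpha F$. Since $F(x) = \int_{I[x]} f \, d\lambda = Uf(x)$, substituting gives $f = \alpha \, Uf$, or equivalently $Uf = (1/\alpha) f$. I would then set $G = f$. This $G$ is strictly positive because $X$ has support $S$ (so the PDF is positive a.e., and we may take a strictly positive version); it lies in $\mc{L}(S)$ because $\|f\| = \int_S f \, d\lambda = 1 < \infty$; and by construction it satisfies \eqref{eq.Eigenvalue}.

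For the converse, I would start with a strictly positive $G \in \mc{L}(S)$ satisfying $UG = (1/\alpha) G$. Set $c = \int_S G \, d\lambda$; since $G > 0$ and $\lambda$ has support $S$, this normalization is a finite positive number. Define $f = G/c$; then $f$ is a PDF on $S$ with respect to $\lambda$, so it defines a Borel probability distribution on $S$, and this distribution has support $S$ since $f$ is strictly positive. The associated UPF is
\[
F(x) = Uf(x) = \frac{1}{c} UG(x) = \frac{1}{\alpha c} G(x) = \frac{1}{\alpha} f(x),
\]
so $f = \alpha F$, which is exactly the constant rate condition with rate $\alpha$.

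There is no real obstacle here, since the argument is essentially a tautology after one recognizes $F$ as $Uf$; the content lies in the setup rather than the verification. The only small point to check is that the normalization constant $c$ is finite and positive, which follows from $G \in \mc{L}(S)$ together with the standing assumption that $\lambda$ has full support so that integrating a strictly positive function cannot give zero. Once that is in place, the two constructions $X \leftrightarrow G$ are inverse to each other up to the normalizing constant, which makes the equivalence transparent.
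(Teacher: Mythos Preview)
Your argument is correct and matches the paper's proof essentially verbatim: both directions amount to normalizing a positive eigenfunction of $U$ to obtain a PDF with $Uf=\frac{1}{\alpha}f$. The only cosmetic difference is that the paper takes $G=F$ in the forward direction while you take $G=f$, but since $f=\alpha F$ these differ by a scalar and satisfy the same eigenvalue equation.
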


\begin{proof} 
If $F$ is the UPF of a distribution with constant rate $\alpha$, then trivially $F$ satisfies the conditions of the proposition since $f = \alpha F$ is a PDF with UPF $F$. Conversely, if $G \in \mc{L}(S)$ is strictly positive and satisfies (\ref{eq.Eigenvalue}), then $f := G/||G||$ is a PDF and $U(f) = \frac{1}{\alpha} f$, so the distribution with PDF $f$ has constant rate $\alpha$.
\end{proof}

\begin{proposition}
Random variable $X$ has constant rate $\alpha$ if and only if
\begin{equation}  \label{eq.rate3}
\E[Lg(X)] = \frac{1}{\alpha} \E[g(X)]
\end{equation}
for every $g \in \mc{D}(S)$.
\end{proposition}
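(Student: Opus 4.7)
The plan is to derive this as a direct consequence of Proposition~\ref{p.expect} specialized to $n=0$, which gives the identity
\[
\int_S g(x) F(x) \, d\lambda(x) = \E[Lg(X)], \quad g \in \mc{D}(S). \tag{$\ast$}
\]
This identity is established in the proof of Proposition~\ref{p.expect} by a Fubini argument and needs no assumption that $X$ has a PDF.

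For the forward direction, I would assume $X$ has constant rate $\alpha$, so by definition $f = \alpha F$ is a PDF of $X$. Then for any $g \in \mc{D}(S)$,
\[
\E[g(X)] = \int_S g(x) f(x) \, d\lambda(x) = \alpha \int_S g(x) F(x) \, d\lambda(x) = \alpha \E[Lg(X)],
\]
where the last equality is $(\ast)$. Dividing by $\alpha$ yields (\ref{eq.rate3}).

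For the converse, I would assume (\ref{eq.rate3}) holds for every $g \in \mc{D}(S)$. Combining with $(\ast)$ gives
\[
\int_S g(x) \bigl(\alpha F(x)\bigr) \, d\lambda(x) = \E[g(X)]
\]
for all $g \in \mc{D}(S)$. Since $\mc{D}(S)$ contains the indicator function of every compact subset of $S$ (such indicators are bounded, hence bounded on each $D[x]$), and since the distribution of $X$ is a regular Borel measure determined by its values on compact sets, this identity forces the distribution of $X$ to be absolutely continuous with respect to $\lambda$ with density $\alpha F$. Hence $f = \alpha F$ is a PDF of $X$, so $X$ has constant rate $\alpha$.

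There is no serious obstacle; the only subtle point is the measure-determining step in the converse, where one must verify that the class $\mc{D}(S)$ is rich enough to separate Borel measures on $S$. This follows from the standing topological assumptions on $(S,\preceq,\lambda)$, which guarantee regularity of Borel measures and a sufficient supply of compactly supported bounded test functions.
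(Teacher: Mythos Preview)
Your proof is correct and follows essentially the same route as the paper's: both directions rely on the $n=0$ case of Proposition~\ref{p.expect} to identify $\E[Lg(X)]$ with $\int_S g(x)F(x)\,d\lambda(x)$, and then the constant-rate condition $f=\alpha F$ is read off directly. Your additional remarks on the measure-determining role of $\mc{D}(S)$ in the converse simply make explicit what the paper leaves as ``it follows.''
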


\begin{proof} 
Suppose that $X$ has constant rate $\alpha$ and UPF $F$, so that $f = \alpha F$ is a PDF of $X$.  Let $g \in \mc{D}(S)$, From Proposition \ref{p.expect},
\[ \E[Lg(X)] = \int_S g(x) F(x) d\lambda(x) = \int_S g(x) \frac{1}{\alpha} f(x) d\lambda(x) = \frac{1}{\alpha} \E[g(X)] \]
Conversely, suppose that (\ref{eq.rate3}) holds for every $g \in \mc{D}(S)$.  Again let $F$ denote the UPF of $X$. By Proposition \ref{p.expect}, condition (\ref{eq.rate3}) is equivalent to 
\[ \int_S \alpha g(x) F(x) d\lambda(x) = \E[g(X)] \]
It follows that $f = \alpha F$ is a PDF of $X$.
\end{proof}

If $X$ has constant rate $\alpha$, then iterating (\ref{eq.rate3}) gives 
\[
\E[L^n(g)(X)] = \frac{1}{\alpha^n} \E[g(X)], \quad n \in \N
\]
In particular, if $g = \bs{1}$ then
\[ \E[\lambda_n(X)] = \frac{1}{\alpha^n}, \quad n \in \N \]
and if $n = 0$, $\E[\lambda_1(X)] = \E(\lambda(D[X])) = 1 / \alpha$. 

Suppose now that $(S, \preceq)$ is a discrete standard poset and that $X$ has constant rate $\alpha$ on $S$. From Proposition \ref{p.rate0}, $\alpha \leq 1$, and if $\alpha = 1$, all elements of $S$ are maximal, so that $(S, \preceq)$ is an antichain.  Conversely, if $(S, \preceq)$ is an antichain, then any distribution on $S$ has constant rate 1. On the other hand, if $(S, \preceq)$ is not an antichain, then $\alpha < 1$, and $S$ has no maximal elements.

Consider again a discrete standard poset $(S, \preceq)$. For $x, \, y \in S$, we say that $x$ and $y$ are {\em upper equivalent} if $I(x) = I(y)$. Upper equivalence is the equivalence relation associated with the function $s \mapsto I(s)$ from $S$ to $\mathcal{P}(S)$.  Suppose that $X$ has constant rate $\alpha$ on $S$ and UPF $F$. If $x, \, y$ are upper equivalent, then $\Pr(X \succ x) = \Pr(X \succ y)$ so from (\ref{eq.rate0}), 
\[ F(x) = \frac{\Pr(X \succ x)}{1 - \alpha} = \frac{\Pr(X \succ y)}{1 - \alpha} = F(y) \]
Thus, the UPF (and hence also the PDF) are constant on the equivalence classes.

\subsection{Ladder variables and partial products}

Assume that $(S, \preceq, \lambda)$ is a standard poset. Suppose that $\bs{X} = (X_1, X_2, \ldots)$ is an IID sequence with common UPF $F$, and let $\bs{Y} = (Y_1, Y_2, \ldots)$ be the corresponding sequence of ladder variables.

\begin{proposition} \label{p.gamma1}
If the distribution has constant rate $\alpha$ then
\begin{enumerate}
\item \label{p.gamma1.1}
$\bs{Y}$ is a homogeneous Markov chain on $S$ with transition probability density $g$ given by 
\[
g(y, z) = \alpha \frac{F(z)}{F(y)}, \quad (y, z) \in D_2
\]
\item \label{p.gamma1.2}
$(Y_1, Y_2, \ldots, Y_n)$ has PDF $g_n$ given by
\[
g_n(y_1, y_2, \ldots, y_n) = \alpha^n F(y_n), \quad (y_1, y_2, \ldots, y_n) \in D_n
\]
\item \label{p.gamma1.3}
$Y_n$ has PDF $f_n$ given by
\[
f_n(y) = \alpha^n \lambda_{n-1}(y) F(y), \quad y \in S
\]
\item \label{p.gamma1.4} 
The conditional distribution of $(Y_1, Y_2, \ldots, Y_{n-1})$ given $Y_n = y$ is uniform on $D_{n-1}[y]$.
\end{enumerate}
\end{proposition}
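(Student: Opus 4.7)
The plan is to derive each of the four parts essentially mechanically from the results already established for ladder variables under a general UPF $F$, simply specializing to the constant rate case where $f = \alpha F$ and $r \equiv \alpha$.

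First I would dispatch part \ref{p.gamma1.1}: Proposition \ref{p.ladder1} already tells us that $\bs{Y}$ is a homogeneous Markov chain with transition density $g(y,z) = f(z)/F(y)$ on $D_2$. Substituting the constant rate identity $f = \alpha F$ yields $g(y,z) = \alpha F(z)/F(y)$, which is exactly the claim. This requires no genuine calculation, only recognizing that constant rate means the conditional density of the next ladder variable given the current one rescales $F$ by the constant $\alpha/F(y)$.

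For part \ref{p.gamma1.2} I would appeal to formula (\ref{eq.jointPDF}), which expresses the joint density of $(Y_1, \ldots, Y_n)$ as $r(y_1) \cdots r(y_{n-1}) f(y_n)$ on $D_n$. With $r \equiv \alpha$ and $f(y_n) = \alpha F(y_n)$, this collapses to $\alpha^{n-1} \cdot \alpha F(y_n) = \alpha^n F(y_n)$. Part \ref{p.gamma1.3} then follows by marginalization: the density of $Y_n$ is obtained by integrating $g_n$ over $(y_1, \ldots, y_{n-1})$ in the region $y_1 \preceq \cdots \preceq y_{n-1} \preceq y$, which is precisely $D_{n-1}[y]$. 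Since the integrand $\alpha^n F(y)$ does not depend on $y_1, \ldots, y_{n-1}$, the integral equals $\alpha^n F(y) \lambda^{n-1}(D_{n-1}[y]) = \alpha^n \lambda_{n-1}(y) F(y)$, using the defining identity $\lambda_{n-1}(y) = \lambda^{n-1}(D_{n-1}[y])$.

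Finally, part \ref{p.gamma1.4} drops out by taking the ratio of the joint density from \ref{p.gamma1.2} to the marginal density from \ref{p.gamma1.3}: the conditional density of $(Y_1, \ldots, Y_{n-1})$ given $Y_n = y$ is
\[
\frac{\alpha^n F(y)}{\alpha^n \lambda_{n-1}(y) F(y)} = \frac{1}{\lambda_{n-1}(y)}, \quad (y_1, \ldots, y_{n-1}) \in D_{n-1}[y],
\]
which is the uniform density on $D_{n-1}[y]$ with respect to $\lambda^{n-1}$. There is no real obstacle here; the whole proposition is a transparent consequence of the general ladder variable formulas combined with the defining relation $f = \alpha F$. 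The only thing one needs to be mildly careful about is keeping track of the exponent of $\alpha$ in part \ref{p.gamma1.2} — there are $n-1$ factors of $r$ and one factor of $f$, giving $\alpha^{n-1} \cdot \alpha = \alpha^n$ rather than $\alpha^{n-1}$.
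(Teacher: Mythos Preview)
Your proposal is correct and follows essentially the same approach as the paper: the paper's proof simply notes that parts \ref{p.gamma1.1} and \ref{p.gamma1.2} follow immediately from Proposition \ref{p.ladder1} (and its corollary (\ref{eq.jointPDF})), that part \ref{p.gamma1.3} follows from part \ref{p.gamma1.2}, and that part \ref{p.gamma1.4} follows from parts \ref{p.gamma1.2} and \ref{p.gamma1.3}. You have filled in exactly these steps with the appropriate substitutions and the marginalization/ratio arguments.
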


\begin{proof} 
Parts \ref{p.gamma1.1} and \ref{p.gamma1.2} follows immediately from Proposition \ref{p.ladder1}. Part \ref{p.gamma1.3} follows from part \ref{p.gamma1.2} and part \ref{p.gamma1.4} from parts \ref{p.gamma1.2} and \ref{p.gamma1.3}.
\end{proof}
 
When $\bs{X}$ is an IID sequence with constant rate $\alpha$, the sequence of ladder variables $\bs{Y}$ is analogous to the arrival times in the ordinary Poisson process. By Part \ref{p.gamma1.4}, this defines the most random way to put a sequence of ordered points in $S$. Part \ref{p.gamma1.4} almost characterizes constant rate distributions.

\begin{proposition} \label{p.gamma2}
Suppose that $(S, \preceq \lambda)$ is a standard, {\em connected} poset, and that $\bs{Y}$ is the sequence of ladder variables associated with an IID sequence $\bs{X}$. If the conditional distribution of $Y_1$ given $Y_2 = y$ is uniform on $D[y]$, then the common distribution of $\bs{X}$ has constant rate.
\end{proposition}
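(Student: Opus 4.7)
The plan is to extract the rate function from the conditional density of $Y_1$ given $Y_2$, use the uniformity hypothesis to show the rate is constant on every principal lower set $D[y]$, and then use connectedness to upgrade this local constancy to global constancy.

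First I would invoke Proposition \ref{p.ladder1} with $n=2$: the joint density of $(Y_1, Y_2)$ on $D_2$ is
\[
g_2(y_1, y_2) = f(y_1) \frac{f(y_2)}{F(y_1)} = r(y_1)\, f(y_2).
\]
Integrating out $y_1$ over $D[y_2]$ gives the marginal $f_2(y_2) = f(y_2) \int_{D[y_2]} r(t) \, d\lambda(t)$; note that $f_2(y_2) > 0$ because $X$ has support $S$, so $f$ and $F$ are strictly positive and $\lambda(D[y_2]) > 0$. Dividing, the conditional PDF of $Y_1$ given $Y_2 = y_2$ is
\[
g_{1 \mid 2}(y_1 \mid y_2) = \frac{r(y_1)}{\int_{D[y_2]} r(t) \, d\lambda(t)}, \quad y_1 \in D[y_2].
\]
The uniformity hypothesis asserts this equals $1 / \lambda(D[y_2])$, so $r$ is $\lambda$-almost everywhere constant on $D[y_2]$ for each $y_2$. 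Evaluating at $y_1 = y_2$ identifies the constant as $r(y_2)$, so $r(y_1) = r(y_2)$ whenever $y_1 \preceq y_2$.

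Connectedness then finishes the argument: given any $x, y \in S$, choose a finite sequence $x = x_0, x_1, \ldots, x_n = y$ with consecutive terms comparable. The previous step forces $r(x_{i-1}) = r(x_i)$ for each $i$, so $r(x) = r(y)$, and $r$ is constant on $S$, i.e., $\bs{X}$ has constant rate.

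The main obstacle is the measure-theoretic subtlety of the middle step: in the continuous setting, the conditional density is defined only up to $\lambda$-null sets in $y_1$ and only for $f_2$-almost every $y_2$, so the conclusion ``$r$ is constant on $D[y_2]$'' is initially almost-everywhere rather than pointwise. This is resolved by replacing $f$ with a version for which $r$ is genuinely constant on each $D[y]$ (which changes neither the distribution nor the UPF $F$), after which the chain argument via connectedness is purely set-theoretic. In the discrete case (our main interest), $\lambda$ is counting measure and these issues vanish entirely, so the proof goes through verbatim.
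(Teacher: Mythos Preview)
Your proof is correct and follows essentially the same route as the paper: compute the conditional density of $Y_1$ given $Y_2=y$ as a multiple of the rate function $r$ restricted to $D[y]$, use the uniformity hypothesis to conclude $r$ is constant on each $D[y]$, and then pass to global constancy via a comparable chain using connectedness. Your version is a bit more explicit (you compute the normalizing constant and flag the almost-everywhere subtlety in the continuous case), but the argument is the same as the paper's.
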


\begin{proof}
From (\ref{eq.jointPDF}), the conditional PDF of $Y_1$ given $Y_2 = y \in S$ is 
\[
h_1(x | y) = \frac{1}{C(y)} r(x), \quad x \in D[y]
\]
where $C(y)$ is the normalizing constant. But this is constant in $x\in D[y]$ by assumption, and hence $r$ is constant on $D[y]$ for each $y \in S$. Thus, it follows that $r(x) = r(y)$ whenever $x \perp y$. Since $S$ is connected, $r$ is constant on $S$.
\end{proof}

If the poset is not connected, it's easy to construct a counterexample to Proposition \ref{p.gamma2}: Consider two parallel copies of $(\N, \le)$.  Put $p$ of a geometric distribution with rate $\alpha$ on the first copy and $1 - p$ of a geometric distribution with rate $\beta$ on the second copy, where $p, \, \alpha, \, \beta \in (0, 1)$ and $\alpha \ne \beta$. The resulting distribution has rate $\alpha$ on the first copy and rate $\beta$ on the second copy. If $\bs{X}$ is an IID sequence with the distribution so constructed, and $\bs{Y}$ the corresponding sequence of ladder variables, then the conditional distribution of $(Y_1, \ldots, Y_{n-1})$ given $Y_n = y$ is uniform for each $y \in S$.

Suppose now that $\bs{X}$ is an IID sequence from a distribution with constant rate $\alpha$ and UPF $F$. Let $\bs{Y}$ denote the corresponding sequence of ladder variables, and suppose that the sequence $\bs{Y}$ is thinned with probability $p \in (0, 1)$ as describe in Section \ref{ss.ladder1}. Let $Y_M$ denote the first accepted point.

\begin{proposition} \label{p.thinned}
The PDF $g$ of $Y_M$ is is given by
\[ g(x) = p \alpha \Lambda[x, \alpha (1 - r)] F(x), \quad x \in S \]
where $\Lambda$ is the generating function associated with $(\lambda_n: n \in \N)$.
\end{proposition}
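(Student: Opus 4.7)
The plan is to decompose the PDF of $Y_M$ by conditioning on the index $M$, invoke the explicit marginal PDF of the $n$-th ladder variable from Proposition \ref{p.gamma1}, and then recognize the resulting power series as the generating function $\Lambda$ evaluated at the right argument.

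First I would recall from Section \ref{ss.ladder1} that $M$, the index of the first accepted point after independent thinning with retention probability $p$, is independent of the ladder sequence $\bs{Y}$ and has the geometric distribution on $\N_+$ with parameter $p$, so $\Pr(M = n) = p(1-p)^{n-1}$. Since the conditional law of $Y_M$ given $M = n$ is simply the unconditional law of $Y_n$, the law of total probability gives
\[ g(x) = \sum_{n=1}^\infty p(1-p)^{n-1} f_n(x), \quad x \in S, \]
where $f_n$ denotes the PDF of $Y_n$.

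Next I would substitute the formula $f_n(x) = \alpha^n \lambda_{n-1}(x) F(x)$ from part \ref{p.gamma1.3} of Proposition \ref{p.gamma1}, pull the common factor $p\alpha F(x)$ out of the sum, and reindex by $k = n - 1$. What remains is
\[ \sum_{k=0}^\infty [\alpha(1-p)]^k \lambda_k(x), \]
which is $\Lambda[x, \alpha(1-p)]$ by definition. This gives the asserted formula, reading the ``$r$'' in the statement as a typographical slip for the thinning parameter $p$.

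The only step that is not purely formal is justifying convergence of the series and the termwise manipulations. Since every summand $p(1-p)^{n-1} f_n(x)$ is nonnegative and the total sum integrates to $1$ (because $Y_M$ is a bona fide random variable in $S$), Tonelli/monotone convergence validates the rearrangement without effort, and the resulting series $\Lambda[x, \alpha(1-p)]$ is forced to be pointwise finite on the support of $F$. Thus the probabilistic meaning of the mixture automatically delivers the convergence, which is really the only point worth flagging; the rest is bookkeeping against the already-established identity for $f_n$.
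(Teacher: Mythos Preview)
Your argument is correct and follows exactly the same route as the paper: condition on $M$, substitute the formula $f_n(x)=\alpha^n\lambda_{n-1}(x)F(x)$ from Proposition~\ref{p.gamma1}\ref{p.gamma1.3}, factor out $p\alpha F(x)$, and identify the remaining series as $\Lambda[x,\alpha(1-p)]$. Your added remarks on convergence via Tonelli and your observation that the ``$r$'' in the displayed formula is a typo for $p$ are both apt; the paper's own proof silently uses $p$ throughout and does not comment on convergence.
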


\begin{proof}
For $x \in S$,
\begin{align*}
g(x) &= \E[f_M(x)] = \sum_{n=1}^\infty p (1 - p)^{n-1} f_n(x) \\
&= \sum_{n=1}^\infty p (1 - p)^{n-1} \alpha^n \lambda_{n-1}(x) F(x) \\
&= \alpha p F(x) \sum_{n=1}^\infty [\alpha (1 - p)]^{n-1} \lambda_{n-1}(x)\\
&= \alpha p F(x) \Lambda[x, \alpha (1 - p)]
\end{align*}
\end{proof}

In general, $Y_M$ does not have constant rate, but as we will see in Section \ref{s.trees}, does have constant rate when $(S, \preceq)$ is a tree.

Suppose now that $(S, \cdot , \lambda)$ is a standard positive semigroup and that $\bs{X} = (X_1, X_2, \ldots)$ is an IID sequence.  Let $\bs{Y}$ denote the sequence of ladder variables and $\bs{Z}$ the partial product sequence.  If the underlying distribution of $\bs{X}$ is exponential, then the distribution has constant rate, so Proposition \ref{p.gamma1} applies. But by Proposition \ref{p.product1}, $\bs{Z}$ is also a homogeneous Markov chain with transition probability
\[
h(y, z) = f(y^{-1} z) = \alpha F(y^{-1} z) = \alpha \frac{F(y)}{F(z)}, \quad y \in S, z \in I[y]
\]
where $\alpha$ is the rate constant, $f$ the PDF, and $F$ the UPF. Thus $\bs{Z}$ also satisfies the results in Proposition \ref{p.gamma1}, and in particular, $\bs{Y}$ and $\bs{Z}$ are equivalent.  The converse is also true.

\begin{proposition}
If $\bs{Y}$ and $\bs{Z}$ are equivalent then the underlying distribution of $\bs{X}$ is exponential.
\end{proposition}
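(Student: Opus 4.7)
The plan is to use the equivalence of $\bs{Y}$ and $\bs{Z}$ to extract a functional equation that forces the distribution of $\bs{X}$ to be exponential. Both chains share the same initial distribution (that of $X_1$, with PDF $f$), so equivalence as stochastic processes amounts to equality of their one-step transition densities on $D_2$. Comparing $g(y, z) = f(z)/F(y)$ from Proposition~\ref{p.ladder1} with $h(y, z) = f(y^{-1} z)$ from Proposition~\ref{p.product1}, and writing a generic point of $D_2$ as $(y, y t)$ with $t \in S$, we are led to the functional equation
\[
f(yt) = F(y) f(t), \quad y, t \in S.
\]

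With this identity in hand, both defining ingredients of an exponential distribution fall out quickly. Specializing at $t = e$ gives $f(y) = f(e) F(y)$, so $\bs{X}$ has constant rate $\alpha := f(e)$; substituting $f = \alpha F$ back into the functional equation and cancelling $\alpha$ yields $F(y t) = F(y) F(t)$ for all $y, t \in S$. The characterization from \cite{Siegrist} recalled at the start of Section~\ref{s.rate}---that a distribution on a positive semigroup is exponential if and only if it has constant rate and its UPF is multiplicative---then finishes the proof.

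The main obstacle is the usual measure-theoretic subtlety: equivalence of the chains delivers equality of transition densities on $D_2$ only up to a $\lambda \otimes \lambda$-null set, so the functional equation initially holds only for almost every $(y, t)$, which does not obviously license evaluation at the specific point $t = e$. I would handle this by first leveraging the associativity identity $f((y_1 y_2) t) = f(y_1 (y_2 t))$: applying the functional equation to each side gives $F(y_1 y_2) f(t) = F(y_1) F(y_2) f(t)$ for a.e. triple, and since $f > 0$ on a set of full measure (as $X$ has support $S$), this yields $F(y_1 y_2) = F(y_1) F(y_2)$ a.e.\ on $S \times S$. Rewriting the original identity in terms of the rate function $r = f/F$ then gives $r(y t) = r(t)$ a.e., and the relation $I[e] = S$ forces $r$ to be a.e.\ constant. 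The equality $f = \alpha F$ therefore holds $\lambda$-almost everywhere, which is all that is needed since PDFs are only determined up to a null set, and the conclusion follows by applying the characterization from \cite{Siegrist}.
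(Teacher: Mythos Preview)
Your proof is correct and follows essentially the same route as the paper: equate the transition densities of $\bs{Y}$ and $\bs{Z}$ to obtain $f(yt)=F(y)f(t)$, specialize at $t=e$ to get constant rate $\alpha=f(e)$, and substitute back to obtain multiplicativity of $F$. The paper simply writes ``we may assume'' the transition densities agree everywhere and does not elaborate on the almost-everywhere subtlety, so your additional paragraph goes somewhat beyond what the paper does; your sketch there is reasonable in spirit, though the final step (deducing that $r$ is a.e.\ constant from $r(yt)=r(t)$ a.e.\ via $I[e]=S$) would need a little more care to be fully rigorous.
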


\begin{proof}
Let $f$ denote the common PDF of $\bs{X}$ and $F$ the corresponding UPF.  Since $Y_1 = Z_1 = X_1$, the equivalence of $\bs{Y}$ and $\bs{Z}$ means that the two Markov chains have the same transition probability density, almost surely with respect to $\lambda$.  Thus we may assume that
\[
\frac{f(z)}{F(y)} = f(y^{-1}z), \quad (y, z) \in D_2
\]
Equivalently, 
\[
f(x u) = F(x) f(u), \quad x, \, u \in S
\]
Letting $u = e$ we have $f(x) = f(e) F(x)$, so the distribution has constant rate $\alpha = f(e)$. But then we also have $\alpha F(x u) = F(x) \alpha F(u)$, so $F(x u) = F(x) F(u)$, and hence the distribution is exponential.
\end{proof}

\section{Trees} \label{s.trees}

In this section we consider a standard discrete poset $(S, \preceq)$ whose covering graph is a rooted tree. Thus the root $e$ is the minimum element. When $x \preceq y$, there is a unique path from $x$ to $y$ and we let $d(x, y)$ denote the distance from $x$ to $y$. We abbreviate $d(e,x)$ by $d(x)$. Let $A(x)$ denote the children of $x$, and more generally let
\[
A_n(x) = \{y \in S: x \preceq y, \, d(x, y) = n\}
\]
for $x \in S$ and $n \in \N$.  Thus, $A_0(x) = \{x\}$, and $\{A_n(x): n \in \N\}$ partitions $I[x]$. When $x = e$, we write $A_n$ instead of $A_n(e)$.

The only trees that correspond to positive semigroups are those for which $A(x)$ has the same cardinality for every $x \in S$.  In this case, the poset corresponds to the free semigroup on the alphabet $A(e)$, with concatenation as the semigroup operation \cite{Siegrist}.

Since there is a unique path from $e$ to $x$, it follows from (\ref{eq.cumulativeN}) that the cumulative function of order $n \in \N$ is 
\[ \lambda_n(x) = \binom{n + d(x)}{d(x)} = \binom{n + d(x)}{n}, \quad x \in S \]
By (\ref{eq.GF2}), the corresponding generating function is 
\[ \Lambda(x, t) = \frac{1}{(1 - t)^{d(x) + 1}}, \quad x \in S, \, |t| < 1 \]

\subsection{Upper Probability Functions}

Let $X$ be a random variable with values in $S$ having PDF $f$ and UPF $F$.  Then 
\begin{equation} \label{eq.treeUPF0}
F(x) = \Pr(X = x) + \sum_{y \in A(x)} \Pr(Y \succeq y) = f(x) + \sum_{y \in A(x)} F(y) 
\end{equation}
In particular, $F$ uniquely determines $f$. Moreover, we can characterize upper probability functions.

\begin{proposition} \label{p.treeUPF}
Suppose that $F: S \to (0, 1]$.  Then $F$ is the UPF of a distribution on $S$ if and only if
\begin{enumerate}
	\item $F(e) = 1$
	\item $F(x) > \sum_{y \in A(x)} F(y)$ for every $x \in S$.
	\item $\sum_{x \in A_n} F(x) \to 0$ as $n \to \infty$.
\end{enumerate}
\end{proposition}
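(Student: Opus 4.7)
The plan is to prove both implications by exploiting the recursive identity (\ref{eq.treeUPF0}) and the telescoping structure of the tree. For the forward direction, suppose $F$ is the UPF of some distribution with PDF $f$. Condition 1 holds because $e$ is the minimum, so $X \succeq e$ always. Condition 2 is immediate from (\ref{eq.treeUPF0}) since $f(x) = F(x) - \sum_{y \in A(x)} F(y) > 0$ (because $X$ has support $S$). For condition 3, observe that in a rooted tree the sets $\{I[x] : x \in A_n\}$ are pairwise disjoint, so
\[
\sum_{x \in A_n} F(x) = \Pr(X \in \textstyle\bigcup_{x \in A_n} I[x]) = \Pr(d(X) \geq n),
\]
and the right-hand side tends to $0$ since $d(X) < \infty$ almost surely.

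For the reverse direction, given $F$ satisfying the three conditions, I would \emph{define} $f(x) = F(x) - \sum_{y \in A(x)} F(y)$, which is strictly positive by condition 2, and verify that $f$ is a PDF. Writing $s_n = \sum_{x \in A_n} F(x)$ and using that every $y \in A_{n+1}$ has a unique parent in $A_n$,
\[
\sum_{x \in A_n} f(x) = \sum_{x \in A_n} F(x) - \sum_{x \in A_n} \sum_{y \in A(x)} F(y) = s_n - s_{n+1}.
\]
Summing over $n$ telescopes to $s_0 - \lim_{n \to \infty} s_n = F(e) - 0 = 1$ by conditions 1 and 3.

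It remains to check that this $f$ has UPF equal to $F$. For fixed $x$, the subtree rooted at $x$ is partitioned by $\{A_n(x) : n \in \N\}$, and the same telescoping argument applied within this subtree gives
\[
\sum_{y \succeq x} f(y) = \sum_{n=0}^\infty \Bigl( \sum_{y \in A_n(x)} F(y) - \sum_{y \in A_{n+1}(x)} F(y) \Bigr) = F(x) - \lim_{n \to \infty} \sum_{y \in A_n(x)} F(y).
\]
Since $A_n(x) \subseteq A_{n + d(x)}$ and $F$ is positive, the limit is dominated by $s_{n + d(x)} \to 0$, leaving $F(x)$ as required. The main (mild) obstacle is just keeping the bookkeeping straight between levels of the full tree and levels of the subtree at $x$; everything else is a clean telescoping argument driven by conditions 2 and 3.
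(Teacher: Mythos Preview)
Your proof is correct and follows essentially the same approach as the paper: the forward direction uses (\ref{eq.treeUPF0}) and the disjointness of the $I[x]$ over $x \in A_n$, and the converse defines $f(x) = F(x) - \sum_{y \in A(x)} F(y)$ and verifies $\sum_{y \succeq x} f(y) = F(x)$ via the same level-by-level telescoping with the tail controlled by $A_n(x) \subseteq A_{n+d(x)}$. The only cosmetic difference is that the paper proves the identity $\sum_{y \succeq x} f(y) = F(x)$ for general $x$ first and then specializes to $x = e$ to get $\sum f = 1$, whereas you prove the $x = e$ case separately before the general case; your first computation is thus redundant but not wrong.
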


\begin{proof}
Suppose first that $F$ is the UPF of a random variable $X$ taking values in $S$.  Then trivially $F(e) = 1$, and by (\ref{eq.treeUPF0}),
\[ F(x) - \sum_{y \in A(x)} F(y) = \Pr(X = x) > 0 \]
Next, $d(X) \ge n$ if and only if $X \succeq x$ for some $x \in A_n$. Moreover the events $\{X \succeq x\}$ are disjoint over $x \in A_n$.  Thus
\[ \Pr[d(X) \ge n] = \sum_{x \in A_n} F(x) \]
But by local finiteness, the random variable $d(X)$ (taking values in $\N$) has a proper (non-defective) distribution, so $\Pr[d(X) \ge n] \to 0$ as $n \to \infty)$.

Conversely, suppose that $F: S \to (0, 1]$ satisfies conditions (1)--(3).  Define $f$ on $S$ by
\[ f(x) = F(x) - \sum_{y \in A(x)} F(y), \quad x \in S \]
Then $f(x) > 0$ for $x \in S$ by (2).  Suppose $x \in S$ and let $m = d(x)$.  Then
\begin{align}
	\sum_{k=0}^{n-1} \sum_{y \in A_k(x)} f(y) &= \sum_{k=0}^{n-1} \sum_{y \in A_k(x)} \left[F(y) - \sum_{z \in A(y)} F(z)\right] \nonumber \\
	&= \sum_{k=0}^{n-1} \left[\sum_{y \in A_k(x)} F(y) - \sum_{y \in A_k(x)} \sum_{z \in A(y)} F(z) \right] \nonumber \\
	&= \sum_{k=0}^{n-1} \left[ \sum_{y \in A_k(x)} F(y) - \sum_{y \in A_{k+1}(x)} F(y) \right] \nonumber \\
	& = F(x) - \sum_{y \in A_n(x)} F(y) \label{eq.treeCollapse}
\end{align}
since $A_0(x) = \{x\}$ and since the sum collapses. But
\[ 0 \le \sum_{y \in A_n(x)} F(y) \le \sum_{y \in A_{m+n}} F(y) \to 0 \text{ as } n \to \infty \]
Thus letting $n \to \infty$ in (\ref{eq.treeCollapse}) we have 
\begin{equation} \label{eq.treeUPF}
\sum_{y \in I[x]} f(y) = F(x), \quad x \in S 
\end{equation}
Letting $x = e$ in (\ref{eq.treeUPF}) gives $\sum_{y \in S} f(y) = 1$ so $f$ is a PDF on $S$. Another application of (\ref{eq.treeUPF}) then shows that $F$ is the UPF of $f$.
\end{proof} 

Note that $(S, \preceq)$ is a lower semi-lattice. Hence if $X$ and $Y$ are independent random variables with values in $S$, with UPFs $F$ and $G$, respectively, then $X \wedge Y$ has UPF $FG$.

\begin{proposition} \label{p.treeUPF2}
Suppose that $F: S \to (0, 1]$ satisfies $F(e) = 1$ and 
\begin{equation} \label{eq.weak}
F(x)\geq \sum_{y \in A(x)} F(y), \quad x \in S 
\end{equation}
For $p \in (0, 1)$ and define $F_p: S \to (0, 1]$ by $F_p(x) = p^{d(x)}F(x)$. Then $F_p$ is an UPF on $S$.
\end{proposition}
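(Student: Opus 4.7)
The plan is to verify the three conditions of Proposition \ref{p.treeUPF} for $F_p$, which is the natural strategy since Proposition \ref{p.treeUPF} provides an if-and-only-if characterization of UPFs on a tree.

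First, condition (1) is immediate: since $d(e) = 0$, we have $F_p(e) = p^0 F(e) = 1$. For condition (2), I would use the fact that every $y \in A(x)$ satisfies $d(y) = d(x) + 1$, so
\[
\sum_{y \in A(x)} F_p(y) = p^{d(x)+1} \sum_{y \in A(x)} F(y) \le p^{d(x)+1} F(x) < p^{d(x)} F(x) = F_p(x),
\]
where the first inequality uses the hypothesis (\ref{eq.weak}) and the strict inequality uses $p < 1$ together with $F(x) > 0$.

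The main work is condition (3). The key observation is that the weak hypothesis (\ref{eq.weak}) forces the sequence $n \mapsto \sum_{x \in A_n} F(x)$ to be nonincreasing: using the fact that $\{A(x) : x \in A_n\}$ partitions $A_{n+1}$, I would write
\[
\sum_{x \in A_{n+1}} F(x) = \sum_{x \in A_n} \sum_{y \in A(x)} F(y) \le \sum_{x \in A_n} F(x),
\]
and since $\sum_{x \in A_0} F(x) = F(e) = 1$, induction gives $\sum_{x \in A_n} F(x) \le 1$ for all $n$. Therefore
\[
\sum_{x \in A_n} F_p(x) = p^n \sum_{x \in A_n} F(x) \le p^n \to 0 \quad \text{as } n \to \infty.
\]

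I don't anticipate a serious obstacle here. The only subtle point is recognizing that the hypothesis (\ref{eq.weak}) is genuinely weaker than condition (2) of Proposition \ref{p.treeUPF} (it allows equality), and that the geometric damping factor $p^{d(x)}$ does two things at once: it strengthens the weak inequality to a strict one (giving condition (2)) and it forces the tail sum to decay geometrically regardless of whether the sum $\sum_{x \in A_n} F(x)$ itself decays (giving condition (3)). Once these roles are made explicit, the verification is a short calculation.
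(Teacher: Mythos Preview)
Your proposal is correct and follows essentially the same route as the paper: both verify the three conditions of Proposition~\ref{p.treeUPF} directly, using $d(y)=d(x)+1$ for $y\in A(x)$ to get the strict inequality in condition~(2), and the inductive bound $\sum_{x\in A_n}F(x)\le 1$ from \eqref{eq.weak} together with the factor $p^n$ to force condition~(3). Your write-up is in fact slightly more explicit than the paper's, spelling out the partition $\{A(x):x\in A_n\}$ of $A_{n+1}$ in the induction step.
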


\begin{proof}
First, $F_p(e) = p^0 F(e) = 1$.  Next, for $x \in S$, 
\begin{align*}
	\sum_{y \in A(x)} F_p(y) &= \sum_{y \in A(x)} p^{d(y)} F(y) = p^{d(x)+1} \sum_{y \in A(x)} F(x) \\
	&\le p^{d(x) + 1} F(x) < p^{d(x)} F(x) = F_p(x)
\end{align*}
A simple induction using (\ref{eq.weak}) shows that $\sum_{x \in A_n} F(x) \le F(e) = 1$ for $n \in \N$ so
\[
\sum_{x \in A_n} F_p(x) = \sum_{x \in A_n} p^{d(x)} F(x) = p^n \sum_{x \in A_n} F(x) \le p^n \to 0 \text{ as } n \to \infty
\]
so it follows from Proposition \ref{p.treeUPF} that $F_p$ is an UPF.
\end{proof}

Note that $x \mapsto p^{d(x)}$ is not itself an UPF, unless the tree is a path, since properties 2 and 3 in Proposition \ref{p.treeUPF} will fail in general. Thus, even when $F$ is an UPF, we cannot view $F_p$ simply as the product of two UPFs in general.  However, We can give a probabilistic interpretation of the construction in Proposition \ref{p.treeUPF2} in this case.  Thus, suppose that $X$ is a random variable taking values in $S$ with UPF $F$ and PDF $f$.  Moreover, suppose that each edge in the tree $(S, \preceq)$, independently of the other edges, is either {\em working} with probability $p$ or {\em failed} with probability $1 - p$. Define $U$ by
\[
U = \max\{u \preceq X: \text{ the path from } e \text{ to } u \text{ is working} \}
\]

\begin{corollary} \label{c.treeUPF}
Random variable $U$ has UPF  $F_p$ given in Proposition \ref{p.treeUPF2}.
\end{corollary}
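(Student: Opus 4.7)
The plan is to show directly that the event $\{U \succeq x\}$ factors as the intersection of two independent events whose probabilities are $F(x)$ and $p^{d(x)}$, respectively.

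First I would unwind the definition of $U$ using the tree structure. Because $(S, \preceq)$ is a tree with root $e$, the set $\{u \in S : u \preceq X\}$ is totally ordered and coincides with the unique path from $e$ to $X$. Hence the set $\{u \preceq X : \text{path from } e \text{ to } u \text{ is working}\}$ is a chain, always contains $e$, and therefore has a well-defined maximum $U$; moreover $U \preceq X$, and the segment from $e$ to $U$ is exactly the maximal initial portion of the path from $e$ to $X$ on which every edge works.

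The key step is the observation that, for any $x \in S$,
\[
\{U \succeq x\} = \{x \preceq X\} \cap \{\text{every edge on the path from } e \text{ to } x \text{ is working}\}.
\]
For the forward direction, if $U \succeq x$, then $x \preceq U \preceq X$, so $x$ lies on the path from $e$ to $U$, and every edge on the path from $e$ to $x$ is working by definition of $U$. For the reverse direction, if $x \preceq X$ and the path from $e$ to $x$ works, then $x$ is itself an element of the set whose maximum defines $U$, so $U \succeq x$.

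Finally, since $X$ is independent of the random edge states, and since the path from $e$ to $x$ consists of exactly $d(x)$ edges, each working independently with probability $p$, we get
\[
\Pr(U \succeq x) = \Pr(x \preceq X) \cdot p^{d(x)} = F(x) \, p^{d(x)} = F_p(x),
\]
which is the desired UPF. The only subtle point is the chain structure in step one; once that is pinned down, the rest is immediate, so I do not anticipate any real obstacle beyond writing the characterization of $U$ carefully.
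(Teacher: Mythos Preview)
Your proof is correct and is essentially the same argument as the paper's. The paper conditions on $X$ and observes that $\Pr(U \succeq u \mid X = x) = p^{d(u)}$ for $u \preceq x$, then sums over $x \succeq u$; you instead write $\{U \succeq x\}$ directly as the intersection of the two independent events $\{X \succeq x\}$ and $\{\text{path from } e \text{ to } x \text{ works}\}$ and multiply---these are the same computation.
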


\begin{proof}
If $X = x$ and $u \preceq x$, then $U \succeq u$ if and only if the path from $e$ to $u$ is working. Hence $\Pr(U \succeq u | X = x) = p^{d(x)}$ for $x \in S$ and $u \preceq x$. So conditioning on $X$ gives
\[
\Pr(U \succeq u) = \sum_{x \succeq u} p^{d(u)} f(x) = p^{d(u)} F(u)
\]
\end{proof}

\subsection{Rate Functions}\label{ss.treesRate}

Next we are interested in characterizing rate functions of distributions that have support $S$.  If $r$ is such a function, then as noted earlier, $0 < r(x) \le 1$ and $r(x) = 1$ if and only if $x$ is a leaf.  Moreover, if $F$ is the UPF, then $F(e) = 1$ and
\begin{equation} \label{eq.treeRate0}
	\sum_{y \in A(x)} F(y) = [1 - r(x)]F(x)
\end{equation}
Conversely, these conditions give a recursive procedure for constructing an UPF corresponding to a given rate function.  Specifically, suppose that $r: S \to (0, 1]$ and that $r(x) = 1$ for every leaf $x \in S$. First, we define $F(e) = 1$. Then if $F(x)$ has been defined for some $x \in S$ and $x$ is not a leaf, then we define $F(y)$ for $y \in A(x)$ arbitrarily, subject only to the requirement that $F(y) > 0$ and that (\ref{eq.treeRate0}) holds. Note that $F$ satisfies the first two conditions in Proposition \ref{p.treeUPF}.  Hence if $F$ satisfies the third condition, then $F$ is the UPF of a distribution with support $S$ and with the given rate function $r$. The following proposition gives a simple sufficient condition.

\begin{proposition} \label{p.treeRate}
Suppose that $r : S \to (0, 1]$ and that $r(x) = 1$ for each leaf $x \in S$.  If there exists $\alpha > 0$ such that $r(x) \ge \alpha$ for all $x \in S$, then $r$ is the rate function of a distribution with support $S$.
\end{proposition}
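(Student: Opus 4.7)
The plan is to apply the recursive construction sketched immediately before the proposition and show that the resulting function $F$ satisfies all three conditions of Proposition \ref{p.treeUPF}, with condition (3) being where the uniform lower bound $\alpha$ does its work. Concretely, I set $F(e) = 1$ and proceed level by level: once $F(x)$ has been defined at a non-leaf $x$, I pick arbitrary positive values $F(y)$ for $y \in A(x)$ subject to $\sum_{y \in A(x)} F(y) = [1 - r(x)] F(x)$. This choice is possible exactly because $r(x) < 1$ at non-leaves (forced by the hypothesis that $r$ equals $1$ only at leaves, together with the requirement $F(y) > 0$). By construction, $F(e)=1$ and $F$ satisfies (\ref{eq.treeRate0}), which is equivalent to conditions (1) and (2) of Proposition \ref{p.treeUPF}.

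The heart of the proof is a geometric decay estimate on the level sums. Since the covering graph is a tree, each element of $A_{n+1}$ has a unique parent in $A_n$, so $A_{n+1} = \bigsqcup_{x \in A_n} A(x)$. Summing (\ref{eq.treeRate0}) over $x \in A_n$ and using $r(x) \ge \alpha$ gives
\[
    \sum_{y \in A_{n+1}} F(y) \;=\; \sum_{x \in A_n} [1 - r(x)] F(x) \;\le\; (1 - \alpha) \sum_{x \in A_n} F(x).
\]
Leaves in $A_n$ contribute $0$ on the right-hand side (since $r = 1$ there), so they pose no difficulty. Starting from $\sum_{x \in A_0} F(x) = F(e) = 1$, induction yields $\sum_{x \in A_n} F(x) \le (1 - \alpha)^n$. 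If $\alpha < 1$, this tends to $0$ as $n \to \infty$, verifying condition (3); the degenerate case $\alpha = 1$ forces every element to be a leaf and hence $S = \{e\}$, for which the assertion is trivial. Proposition \ref{p.treeUPF} then gives that $F$ is the UPF of a distribution with support $S$, and (\ref{eq.treeRate0}) shows that its rate function is exactly the prescribed $r$.

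The only real obstacle is verifying decay of $\sum_{x \in A_n} F(x)$, and this is precisely where a uniform lower bound on $r$ is essential: without it, nothing prevents the children masses from summing to essentially $F(x)$ infinitely often, and condition (3) of Proposition \ref{p.treeUPF} could fail even though the recursion itself goes through. The disjointness $A_{n+1} = \bigsqcup_{x \in A_n} A(x)$ from the tree structure is what turns the pointwise identity (\ref{eq.treeRate0}) into the clean contraction $(1-\alpha)$ on level sums; this is the step that would break for a general lower semilattice, where a descendant at depth $n+1$ could be reached from several ancestors in $A_n$.
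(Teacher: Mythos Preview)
Your proof is correct and follows exactly the paper's approach: construct $F$ via the recursive procedure and verify the three conditions of Proposition~\ref{p.treeUPF}, with the key step being the inductive bound $\sum_{x \in A_n} F(x) \le (1-\alpha)^n$. You have simply spelled out the induction (via the disjoint decomposition $A_{n+1} = \bigsqcup_{x \in A_n} A(x)$ and the contraction inequality) that the paper summarizes as ``a simple induction on $n$.''
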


\begin{proof}
Let $F: S \to (0, 1]$ be any function constructed according to the recursive procedure above.  Then as noted above, $F$ satisfies the first two conditions in Proposition \ref{p.treeUPF}. A simple induction on $n$ shows that
\begin{equation} \label{eq.treeRate}
\sum_{x \in A_n} F(x) \le (1 - \alpha)^n, \quad n \in \N
\end{equation}
so the third condition in Proposition \ref{p.treeUPF} holds as well.
\end{proof}

Condition (\ref{eq.treeRate}) means that the distribution of $d(X)$ is stochastically smaller than the geometric distribution on $\N$ with rate constant $\alpha$. If $(S, \preceq)$ is not a path, then the rate function does not uniquely determine the distribution.  Indeed, if $x$ has two or more children, then there are infinitely many ways to satisfy (\ref{eq.treeRate0}) given $F(x)$.

\subsection{Constant rate distributions}

Recall that if $(S, \preceq)$ has maximal elements (leaves) then there are no constant rate distribution with support $S$, except in the trivial case that $(S, \preceq)$ is an antichain.

\begin{corollary} \label{c.treeRate}
Suppose that $(S, \preceq)$ is a rooted tree without leaves. Then $F: S \to (0, 1]$ is the UPF of a distribution with constant rate $\alpha$ if and only if $F(e) = 1$ and
\begin{equation} \label{eq.tree}
\sum_{y \in A(x)} F(y) = (1 - \alpha) F(x), \quad x \in S
\end{equation}
\end{corollary}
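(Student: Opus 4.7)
The plan is to read the statement as an immediate specialization of Proposition \ref{p.treeUPF} combined with the rate identity (\ref{eq.treeRate0}). The forward direction is essentially a substitution: if $F$ is a UPF with constant rate $\alpha$, then $F(e) = 1$ by definition, and plugging $r(x) \equiv \alpha$ into (\ref{eq.treeRate0}) gives (\ref{eq.tree}) directly.

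For the converse, I would assume $F\colon S \to (0,1]$ with $F(e) = 1$ and (\ref{eq.tree}), and verify the three conditions of Proposition \ref{p.treeUPF} so that $F$ is a UPF. Condition (1) is given. For (2), since $F(x) > 0$ and $\alpha > 0$ (and $\alpha < 1$, because a rooted tree without leaves is not an antichain, so the discussion after Proposition \ref{p.rate0} forces $\alpha \in (0,1)$), we have
\[
\sum_{y \in A(x)} F(y) = (1-\alpha) F(x) < F(x).
\]
For (3), I would argue by induction on $n$ using (\ref{eq.tree}) and the fact that $\{A(y): y \in A_n\}$ partitions $A_{n+1}$:
\[
\sum_{x \in A_{n+1}} F(x) = \sum_{y \in A_n} \sum_{x \in A(y)} F(x) = (1-\alpha) \sum_{y \in A_n} F(y),
\]
yielding $\sum_{x \in A_n} F(x) = (1-\alpha)^n \to 0$. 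So Proposition \ref{p.treeUPF} guarantees $F$ is the UPF of a distribution whose PDF is
\[
f(x) = F(x) - \sum_{y \in A(x)} F(y) = F(x) - (1-\alpha) F(x) = \alpha F(x),
\]
which is exactly constant rate $\alpha$.

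There is no real obstacle; the only subtlety is justifying $\alpha \in (0,1)$ so that $(1-\alpha)^n \to 0$ in the tail condition, which comes from Proposition \ref{p.rate0} and the observation that a leafless tree is not an antichain. Everything else is a direct calculation using the telescoping/partitioning structure of the tree already exploited in the proof of Proposition \ref{p.treeUPF}.
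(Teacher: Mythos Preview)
Your argument is correct and essentially matches the paper's approach: the paper's one-line proof defers to Proposition~\ref{p.treeRate}, whose own proof is precisely the induction on $\sum_{x\in A_n}F(x)$ via Proposition~\ref{p.treeUPF} that you spell out. You have simply bypassed the intermediate statement and appealed to Proposition~\ref{p.treeUPF} directly, filling in the forward direction and the computation $f(x)=\alpha F(x)$ that the paper leaves implicit.
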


\begin{proof}
This follows immediately from Proposition \ref{p.treeRate}
\end{proof}

\begin{corollary}
Suppose that $X$ has constant rate $\alpha$ on $(S, \preceq)$. Then $d(X)$ has the geometric distribution on $\N$ with rate $\alpha$.
\end{corollary}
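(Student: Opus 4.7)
The plan is to show $\Pr[d(X) \geq n] = (1-\alpha)^n$ for every $n \in \N$, which is exactly the tail of the geometric distribution on $\N$ with rate $\alpha$.

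First, I would recall the identity established in the proof of Proposition~\ref{p.treeUPF}: since the events $\{X \succeq x\}$ for $x \in A_n$ are disjoint and their union is $\{d(X) \geq n\}$, we have
\[
\Pr[d(X) \geq n] = \sum_{x \in A_n} F(x).
\]
So the whole task reduces to computing $\sum_{x \in A_n} F(x)$ under the constant rate hypothesis.

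Next, I would use Corollary~\ref{c.treeRate} to obtain a one-step recursion on the sums $S_n := \sum_{x \in A_n} F(x)$. Partition $A_n$ by parents in $A_{n-1}$: every $y \in A_n$ is a child of exactly one $x \in A_{n-1}$, and every $x \in A_{n-1}$ contributes its children $A(x)$. Applying~(\ref{eq.tree}) to each $x \in A_{n-1}$ gives
\[
S_n = \sum_{x \in A_{n-1}} \sum_{y \in A(x)} F(y) = \sum_{x \in A_{n-1}} (1-\alpha) F(x) = (1-\alpha) S_{n-1}.
\]
With the base case $S_0 = F(e) = 1$, induction yields $S_n = (1-\alpha)^n$, completing the argument.

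There is essentially no obstacle here; the work is already done by Corollary~\ref{c.treeRate} and the partition-by-generation identity from Proposition~\ref{p.treeUPF}. The only small point worth double-checking is that the partition of $A_n$ by parents in $A_{n-1}$ is valid, which follows from the tree hypothesis (each non-root vertex has a unique parent).
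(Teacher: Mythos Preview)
Your proof is correct and follows essentially the same approach as the paper: identify $\Pr[d(X)\ge n]$ with $\sum_{x\in A_n}F(x)$ and then use the recursion (\ref{eq.tree}) to obtain $(1-\alpha)^n$. The paper's version is simply terser, asserting the last equality without writing out the parent-partition induction you spell out.
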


\begin{proof}
For $n \in \N$,
\[
\Pr[d(X) \ge n] = \sum_{x \in A_n} \Pr(X \succeq x) = \sum_{x \in A_n} F(x) = (1 - \alpha)^n
\]
\end{proof}

As a special case of the comments in Section \ref{ss.treesRate}, we can construct the UPFs of constant rate distributions on $(S, \preceq)$ recursively: Start with $F(e) = 1$. If $F(x)$ is defined for a given $x \in S$, then define $F(y)$ for $y \in A(x)$ arbitrarily, subject only to the conditions $F(y) > 0$ and that (\ref{eq.tree}) holds. 

\subsection{Ladder Variables and the point process}

Let $F$ be the UPF of a distribution with constant rate $\alpha$, so that $F$ satisfies the conditions in Corollary \ref{c.treeRate}. Let $\bs{X} = (X_1, X_2, \ldots)$ be an IID sequence with common distribution $F$ and let $\bs{Y} = (Y_1, Y_2, \ldots)$ be the corresponding sequence of ladder variables. By Proposition \ref{p.gamma1}, the distribution of $Y_n$ has PDF
\[
f_n(x) = \alpha^n \binom{n + d(x) - 1}{d(x)} F(x), \quad x \in S
\]

Consider now the thinned point process associated with $\bs{Y}$, where a point is accepted with probability $p$ and rejected with probability $1 - p$, independently from point to point.  

\begin{proposition} \label{p.thinned2}
The distribution of the first accepted point has constant rate $p \alpha / (1 - \alpha + p \alpha)$.
\end{proposition}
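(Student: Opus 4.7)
The plan is to compute the UPF $G$ of $Y_M$ explicitly and then verify that the PDF $g$ from Proposition \ref{p.thinned} satisfies $g = \beta G$ with $\beta = p\alpha/(1-\alpha+p\alpha)$.

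First, I would specialize Proposition \ref{p.thinned} to the tree setting. Since the covering graph of $(S, \preceq)$ is a rooted tree, the generating function has the closed form $\Lambda(x, t) = (1-t)^{-(d(x)+1)}$. Evaluating at $t = \alpha(1-p)$ and writing $q := 1-\alpha+p\alpha$ for brevity gives
\[
g(x) = \frac{p\alpha\, F(x)}{q^{d(x)+1}}, \quad x \in S.
\]

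Second, I would compute $G(x) = \sum_{y \in I[x]} g(y)$ by decomposing the upper set as the disjoint union $I[x] = \bigcup_{n=0}^\infty A_n(x)$, on which $d(y) = d(x) + n$ (since the path from $e$ to $y$ passes uniquely through $x$). A simple induction on $n$, using the constant-rate recursion (\ref{eq.tree}) from Corollary \ref{c.treeRate}, gives $\sum_{y \in A_n(x)} F(y) = (1-\alpha)^n F(x)$. Pulling out the $x$-dependent factors and summing the resulting geometric series in the ratio $(1-\alpha)/q$, which lies in $(0,1)$ because $q - (1-\alpha) = p\alpha > 0$, yields
\[
G(x) = \frac{F(x)}{q^{d(x)}}.
\]

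Finally, taking the ratio $g(x)/G(x)$ causes the $F(x)$ and $q^{d(x)}$ factors to cancel, leaving the constant $p\alpha/q = p\alpha/(1-\alpha+p\alpha)$, which is exactly the claimed rate. There is no serious obstacle in this argument; it is essentially bookkeeping. The only points requiring minor care are the justification of the interchange of summations in the computation of $G$, which follows from Tonelli's theorem since all summands are nonnegative, and the verification that the geometric ratio is strictly less than $1$, which is where the hypothesis $p \in (0,1)$ enters.
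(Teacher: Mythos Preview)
Your argument is correct and arrives at the same formula $G(x)=F(x)/q^{d(x)}$ that the paper obtains, with $q=1-\alpha+p\alpha$. The only methodological difference is in how you verify that $G$ is the UPF of $Y_M$: you compute $Ug$ directly, decomposing $I[x]$ into the level sets $A_n(x)$, iterating the constant-rate recursion (\ref{eq.tree}) to get $\sum_{y\in A_n(x)}F(y)=(1-\alpha)^nF(x)$, and summing the geometric series; the paper instead posits $G$ and checks the one-step recursion $\sum_{y\in A(x)}G(y)=\frac{1-\alpha}{q}\,G(x)$, then invokes Corollary~\ref{c.treeRate} to conclude that $G$ is a constant-rate UPF. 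Your route is self-contained and makes the role of the tree decomposition fully explicit, while the paper's is slightly more economical because it offloads the summability check to the characterization already proved. Both rest on the same ingredient, namely equation~(\ref{eq.tree}), so the difference is one of packaging rather than of idea.
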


\begin{proof}
By Proposition \ref{p.thinned}, the PDF of the first accepted point is
\begin{align*}
g(x) &= p \alpha \Lambda(x, (1 - p) \alpha) F(x) = p \alpha \frac{1}{[1 - (1 - p)\alpha]^{d(x) + 1}} F(x)\\
&= \frac{r \alpha}{1 - \alpha + p \alpha} \frac{F(x)}{(1 - \alpha + p \alpha)^{d(x)}}, \quad x \in S
\end{align*}
Consider the function $G: S \to (0, 1]$ given by 
\begin{equation*}
G(x) = \frac{F(x)}{(1 - \alpha + p \alpha)^{d(x)}}, \quad x \in S
\end{equation*}
Note that $G(e) = 1$ and for $x \in S$
\begin{align*}
\sum_{y \in A(x)} G(y) &= \sum_{y \in A(x)} \frac{F(y)}{(1 - \alpha + p \alpha)^{d(y)}}\\
&= \frac{1}{(1 - \alpha + p \alpha)^{d(x) + 1}} \sum_{y \in A(x)} F(y)\\
&= \frac{1 - \alpha}{(1 - \alpha + p \alpha)^{d(x) + 1}} F(x)\\
&= \frac{1 - \alpha}{1 - \alpha + p \alpha} G(x)
\end{align*}
\end{proof}

In Proposition \ref{p.thinned2} The UPF $F$ is related to the UPF $G$ by the construction in Corollary \ref{c.treeUPF}.  That is, suppose $Y$ denotes the first accepted point in the thinned process.  Then the basic random variable $X$ that we started with can be constructed as
\[
X = \max\{x \preceq Y: \text{ there is a working path from } e \text{ to } x\}
\]
where each edge is working, independently, with probability $1 - \alpha + p \alpha$.

\section{Other Posets}

\subsection{Basic Constructions}

In this section, we mention results for a few basic constructions. For the most part, the arguments are straightforward, so the details are omitted.

On a standard poset, a mixture of constant rate distributions, each with rate $\alpha$, also has constant rate $\alpha$.  

Suppose that $X$ and $Y$ are independent random variables on (possiby different) standard posets. Then $(X, Y)$ has constant rate on the product poset (with the product measure) if and only $X$ and $Y$ have constant rates. Moreover, in this case, if $X$ has rate $\alpha$, $Y$ has rate $\beta$ then $(X, Y)$ has rate $\alpha \beta$. If $X$ and $Y$ are not independent, then it is possible for $(X, Y)$ to have constant rate but not $X$ and $Y$ individually.  It's easy to construct examples using mixtures.  On $[0, \infty)^n$ (with product order and measure), the only distributions with constant rate are mixtures of constant rate distributions that correspond to independent components \cite{Puri}.  This result fails for general standard posets, even those associated with positive semigroups.

Suppose that $X$ has constant rate on a standard poset and that $Y$ is uniformly distributed on a compact standard poset, and that $X$ and $Y$ are independent. Then $(X, Y)$ has constant rate on the lexicographic product of the two posets.

\subsection{Finite subsets of $\N_+$} \label{ss.subsets}

Let $S$ denote the collection of finite subsets of $\N_+$, partially ordered by set inclusion.  From \cite{Siegrist3} the poset $(S, \subseteq)$ is associated with a positive semigroup; the semigroup operation is 
\[
x y = x \cup \{x^c(i): i \in y \}
\]
where $x^c(i)$ is the $i$th element of $x^c$ (ordered by $\leq$ on $\N_+$).  The cumulative function of order $n \in \N$ is 
\begin{equation*}
\lambda_n(x) = (n + 1)^{\#(x)}, \quad x \in S
\end{equation*}
where $\#(x)$ denotes the number of elements in $x$.

The poset $(S, \subseteq)$ is universal in the sense that every discrete, standard poset is isomorphic to a sub-poset of $(S, \subseteq)$. To see this, suppose that $(T, \preceq)$ is such a poset where, without loss of generality, we assume that $T$ is a subset of $\N_+$.  Let $\hat T = \{D[x]: x \in T\}$.  Then $x \mapsto D[x]$ is an order isomorphism from $(T, \preceq)$ to $(\hat T, \subseteq)$, and of course from our basic assumptions, $D[x]$ is finite for each $x \in T$. Thus $(\hat T, \subseteq)$ is a sub-poset of $(S, \subseteq)$.

Suppose that $X$ has PDF $f$ and UPF $F$.  Let  
\begin{align*}
	A_n &= \{x \in S: \#(x) = n\}\\
	A_n(x) &= \{y \in S: x \subseteq y, \, \#(y) = \#(x) + n\}
\end{align*}
So $\{A_n: n \in \N\}$ is a partition of $S$ and $\{A_n(x): n \in \N\}$ is a partition of $I[x]$. The UPF $F$ determines the PDF $f$ as follows:
\[ f(x) = \sum_{n = 0}^\infty (-1)^n \sum_{y \in A_n(x)} F(y), \quad x \in S \]

From \cite{Siegrist3}, $(S, \cdot)$ has no exponential distributions. We have been unable to determine if $(S, \subseteq)$ has constant rate distributions, but we can show that if there {\em is} a constant rate distribution, then the number of elements in the random set must have a Poisson distribution.

\begin{lemma} \label{l.subsets}
Suppose that $X$ has constant rate $\alpha$ and let $U = \#(X)$. Then
\[
\alpha \Pr(U = k) = \E\left[(-1)^{U+k} \binom{U}{k}\right]
\]
\end{lemma}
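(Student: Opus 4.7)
The plan is to combine three ingredients: (i) the constant rate identity $f = \alpha F$, (ii) the Möbius/inclusion-exclusion formula for $f(x)$ on $(S, \subseteq)$ displayed just above the lemma, and (iii) a counting step that turns a sum over $(x, y)$ with $x \subseteq y$, $\#(x) = k$, $\#(y) = k+n$ into a binomial coefficient.

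First I would write $\Pr(U = k) = \sum_{x \in A_k} f(x) = \alpha \sum_{x \in A_k} F(x)$, using constant rate. Next, substitute the inclusion-exclusion expression
\[
f(x) = \sum_{n=0}^\infty (-1)^n \sum_{y \in A_n(x)} F(y)
\]
and swap the two outer summations. Collecting pairs $(x, y)$ with $x \in A_k$ and $y \in A_n(x)$, i.e.\ $x \subseteq y$ with $\#(x) = k$ and $\#(y) = k + n$, and fixing $y$ first, the inner count becomes $\binom{k+n}{k}$, since any $k$-subset of $y$ is an allowed $x$. This yields
\[
\Pr(U = k) = \sum_{n=0}^\infty (-1)^n \binom{k+n}{k} \sum_{y \in A_{k+n}} F(y).
\]

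Then I would reindex with $m = k + n$ and again apply constant rate in the form $\sum_{y \in A_m} F(y) = \Pr(U=m)/\alpha$ to get
\[
\Pr(U = k) = \frac{1}{\alpha} \sum_{m \ge k} (-1)^{m-k} \binom{m}{k} \Pr(U = m).
\]
Multiplying by $\alpha$ and noting that $(-1)^{m-k} = (-1)^{m+k}$ and that $\binom{m}{k} = 0$ for $m < k$, the right side is exactly $\E\bigl[(-1)^{U+k}\binom{U}{k}\bigr]$, which is the claim.

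The only step that is not purely formal is the interchange of the (infinite) sums, so the real work is justifying absolute convergence; this is fine because all terms $F(y)$ are nonnegative and $\sum_{x \in A_k} f(x) \le 1$, so the alternating double sum is dominated by a convergent nonnegative sum after the reindexing collapses it back to $\Pr(U = k)$. I expect no deeper obstacle than bookkeeping on this Fubini argument.
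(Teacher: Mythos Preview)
Your argument is correct and is essentially the paper's own proof: sum the inclusion--exclusion identity for $f$ over $A_k$, swap the order of summation, count the $k$-subsets of each $y$ by $\binom{\#(y)}{k}$, and reindex with $m=k+n$. The only cosmetic difference is that the paper applies $F=f/\alpha$ at the outset (rewriting the inclusion--exclusion as $f(x)=\alpha^{-1}\sum_n(-1)^n\sum_{y\in A_n(x)}f(y)$) while you apply it at the end; your Fubini remark is a bit circular as written, but the paper is no more careful on that point.
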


\begin{proof}
If $X$ has constant rate $\alpha$ then
\begin{equation} \label{eq.subsets2}
f(x) = \frac{1}{\alpha} \sum_{n = 0}^\infty (-1)^n \sum_{y \in A_n(x)} f(y), \quad x \in S
\end{equation}
From (\ref{eq.subsets2})
\[
\Pr(U = k) = \sum_{x \in S_k} f(x) = \frac{1}{\alpha} \sum_{n = 0}^\infty (-1)^n \sum_{x \in S_k} \sum_{y \in S_n(x)} f(y)
\]
The last two sums are over all $x, y \in S$ with $\#(y) = n + k$, and $x \subseteq y$.  Interchanging the order of summation gives
\begin{align*}
\Pr(U = k) &= \frac{1}{\alpha} \sum_{n=0}^\infty (-1)^n \sum_{y \in S_{n+k}} \sum \{f(y): x \in S_k, \, x \subseteq y\} \\
&= \frac{1}{\alpha} \sum_{n=0}^\infty (-1)^n \sum_{y \in S_{n+k}} \binom{n+k}{k} f(y)\\
&= \frac{1}{\alpha} \sum_{n=0}^\infty (-1)^n \binom{n+k}{k} \Pr(U = n + k)
\end{align*}
Equivalently (with the substitution $j = n + k$),
\[
\alpha \Pr(U = k) = \sum_{j = k}^\infty (-1)^{j-k} \binom{j}{k} \Pr(U = j)
\]
With the usual convention on binomial coefficients ($\binom{a}{b} = 0$ if $b > a$) we have
\[
\alpha \Pr(U = k) = \sum_{j=0}^\infty (-1)^{j+k} \binom{j}{k} \Pr(U = j) = \E\left[(-1)^{k + U} \binom{U}{k} \right]
\]
\end{proof}

\begin{corollary} \label{c.subsets}
The condition in Lemma \ref{l.subsets} is equivalent to each of the following:
\begin{enumerate}
	\item $G(t - n) = \alpha^n G(t)$ for $t \in \R$ and $n \in \N$, where $G$ is the probability generating function of $U$.
	\item $\Pr(U = n) = \alpha \E \left[\binom{U}{n}\right]$ for $n \in \N$.
	\item $(-1)^n \E\left[(-1)^U \binom{U}{n}\right] = \alpha^2 \E\left[\binom{U}{n}\right]$ for $n \in \N$.
\end{enumerate}
\end{corollary}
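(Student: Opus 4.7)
The plan is to route all three conditions through the probability generating function $G(t) = \E[t^U] = \sum_{n=0}^\infty \Pr(U=n)t^n$, exploiting the observation that the Lemma \ref{l.subsets} condition is equivalent to the functional equation $G(t-1) = \alpha G(t)$. To set this up, expand $(t-1)^U = \sum_k \binom{U}{k}(-1)^{U-k}t^k$, take expectations, and use $(-1)^{U-k} = (-1)^{U+k}$ to obtain
\begin{equation*}
G(t-1) = \sum_k \E\!\left[(-1)^{U+k}\binom{U}{k}\right] t^k;
\end{equation*}
matching coefficients against $\alpha G(t) = \sum_k \alpha\Pr(U=k)t^k$ precisely recovers the Lemma condition.

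With this reformulation in hand, condition (1) follows by a trivial induction: iterating $G(t-1) = \alpha G(t)$ yields $G(t-n) = \alpha^n G(t)$ for all $n$, and the $n=1$ case of (1) is the functional equation itself. For (2), the dual binomial expansion $(t+1)^U = \sum_n \binom{U}{n}t^n$ gives $G(t+1) = \sum_n \E[\binom{U}{n}]t^n$; rewriting the functional equation as $G(t) = \alpha G(t+1)$ and comparing coefficients of $t^n$ against $G(t) = \sum_n \Pr(U=n)t^n$ produces $\Pr(U=n) = \alpha \E[\binom{U}{n}]$. Both directions for (1) and (2) follow from the uniqueness of Taylor coefficients.

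For (3), writing $(t-1)^U = (-1)^U(1-t)^U = \sum_n (-1)^{U+n}\binom{U}{n}t^n$ yields $G(t-1) = \sum_n (-1)^n\E[(-1)^U\binom{U}{n}]t^n$, so (3) is precisely the coefficient form of the identity $G(t-1) = \alpha^2 G(t+1)$. Two applications of the functional equation give $G(t-1) = \alpha G(t) = \alpha^2 G(t+1)$, establishing (3) from Lemma. For the converse, I would close the loop using the already-established equivalence Lemma $\Leftrightarrow$ (2): combining (3)'s identity $G(t-1) = \alpha^2 G(t+1)$ with the form $G(t) = \alpha G(t+1)$ immediately recovers $G(t-1) = \alpha G(t)$ and hence the Lemma condition.

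The main obstacle I anticipate is precisely this converse for (3): in isolation, $G(t-1) = \alpha^2 G(t+1)$ (equivalently $G(s) = \alpha^2 G(s+2)$) does not factor uniquely through a one-step shift, so (3) $\Rightarrow$ Lemma cannot be established by formal coefficient matching alone. The cleanest strategy is therefore to establish the equivalences with (1) and (2) first and leverage them when handling (3); alternatively, one may appeal to the analyticity and positivity of $G$ as a probability generating function to rule out the would-be period-two ambiguity. Throughout, coefficient matching is valid within the disk of convergence of $G$ about $t=0$.
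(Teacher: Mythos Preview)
The paper states this corollary without proof, so there is nothing to compare against directly. Your generating-function reformulation is the natural approach and is carried out correctly for the equivalences with (1) and (2), and for the implication Lemma $\Rightarrow$ (3): matching Taylor coefficients of $G(t-1)=\alpha G(t)$, $G(t)=\alpha G(t+1)$, and $G(t-1)=\alpha^2 G(t+1)$ does exactly what you say.

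The gap you yourself flag in (3) $\Rightarrow$ Lemma is genuine, and your proposed fix does not close it. You write that one should ``leverage'' the already-established equivalence with (2), combining $G(t-1)=\alpha^2 G(t+1)$ with $G(t)=\alpha G(t+1)$; but the latter identity \emph{is} the Lemma condition, so this is circular. In isolation, $G(s)=\alpha^2 G(s+2)$ only forces the function $\phi(t)=\alpha^{t} G(t)$ (with $\alpha=e^{-\mu}$) to be $2$-periodic, whereas the Lemma condition requires it to be $1$-periodic. Your second suggestion---appeal to analyticity and positivity of the PGF---is the right direction but must actually be executed: one argues that any nonconstant $2$-periodic entire component of $\phi$ contributes Taylor coefficients to $G$ that grow like $(\mu^2+\pi^2 k^2)^{n/2}\cos(n\theta_k)/n!$ for some $k\ge 1$, which change sign infinitely often and dominate the $\mu^n/n!$ contribution from the constant part, contradicting $\Pr(U=n)\ge 0$. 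That argument (or an equivalent one via the Rao--Rubin characterization invoked immediately afterward in the paper) is what is needed; as written, your proposal identifies the obstacle but does not overcome it.
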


\begin{proposition}
The condition in Lemma \ref{l.subsets}, and hence also Corollary \ref{c.subsets}, characterize the Poisson distribution with parameter $\mu = -\ln(\alpha)$.
\end{proposition}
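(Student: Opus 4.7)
The plan is to use the functional equation in condition (1) of Corollary \ref{c.subsets}, $G(t-n)=\alpha^n G(t)$ where $G(t):=\E[t^U]$ is the PGF of $U$, and invoke complex-analytic rigidity to force $G$ to equal the Poisson PGF $e^{\mu(t-1)}$ with $\mu:=-\ln\alpha$. (The converse direction is immediate: for Poisson with parameter $\mu$ one has $\mu_{(n)}=\mu^n$, so $\alpha\E[\binom{U}{n}]=\alpha\mu^n/n!=p_n$ with $\alpha=e^{-\mu}$.) First, since the functional equation forces $G$ to be defined on all of $\R$, the radius of convergence must be infinite and $G$ extends to an entire function. The iterated relation $G(n)=\alpha^{1-n}$ together with $|G(z)|\le G(|z|)$ shows $G$ has exponential type at most $\mu$.

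Next, I would set $H(z):=G(z)/e^{\mu(z-1)}$. Using $\alpha e^\mu=1$, the functional equation yields $H(z+1)=H(z)$, so $H$ is $1$-periodic; also $H(n)=1$ for every $n\in\Z$. The type bound on $G$ combined with periodicity gives $|H(z)|\le C e^{\mu|\Im z|}$ throughout $\mathbb{C}$. Setting $w=e^{2\pi iz}$ realizes $H$ as an analytic function on $\mathbb{C}\setminus\{0\}$ with at most a finite-order pole at $0$ and polynomial growth at $\infty$; hence
\[ H(z)=\sum_{|k|\le N}c_k\,e^{2\pi ikz} \]
is a finite Fourier sum, with $c_{-k}=\overline{c_k}$ since $H$ is real on $\R$.

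The main obstacle is to eliminate the nonzero Fourier modes. Substituting back gives $G(z)=\alpha\sum_k c_k e^{\omega_k z}$ with $\omega_k:=\mu+2\pi ik$, so $p_n=(\alpha/n!)\sum_k c_k\omega_k^n$. Suppose $c_K\ne 0$ for some $K\ge 1$, chosen maximal. Since $|\omega_K|=\sqrt{\mu^2+4\pi^2K^2}$ strictly exceeds $|\omega_k|$ for $|k|<K$, for large $n$
\[ \sum_k c_k\omega_k^n=2|c_K||\omega_K|^n\cos(n\theta_K+\arg c_K)+o(|\omega_K|^n), \]
with $\theta_K:=\arg\omega_K\in(0,\pi/2)$. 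Because $\theta_K\in(0,\pi/2)$, the cosine takes values below some $-\delta<0$ for infinitely many $n$: in the irrational case $\theta_K/\pi\notin\Q$ by Weyl equidistribution, and in the rational case $\theta_K=2\pi p/q$ with $q\ge 3$ because the $q$ equally-spaced angles cannot all fit in an arc of length $\pi$. Hence $p_n<0$ for infinitely many $n$, contradicting $p_n\ge 0$. Therefore only $c_0$ is nonzero; $c_0=H(0)=1$ then gives $G(z)=e^{\mu(z-1)}$, showing $U$ is Poisson with parameter $\mu=-\ln\alpha$.
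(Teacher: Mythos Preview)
Your argument is correct and is a genuinely different route from the paper's. The paper does not prove the characterization directly; it observes that condition~(2) of Corollary~\ref{c.subsets}, namely $\Pr(U=n)=\alpha\,\E\!\bigl[\binom{U}{n}\bigr]$, is (after a one--line translation) an instance of the Rao--Rubin characterization of the Poisson law \cite{Rao}, and cites that result. Your proof is self--contained: you exploit the functional equation $G(t-1)=\alpha G(t)$ to force $G$ entire of exponential type~$\mu$, deduce that $H=G/e^{\mu(z-1)}$ is a $1$--periodic entire function of growth $O(e^{\mu|\Im z|})$ and hence a trigonometric polynomial, and then use positivity of the coefficients $p_n$ together with an oscillation argument to kill all nonconstant Fourier modes. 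The paper's approach is quicker but defers all the work to an external theorem whose own proof is nontrivial; your approach is longer but exposes directly the analytic rigidity behind the result and avoids any black box.

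Two small points you might tighten. First, the claim that ``the functional equation forces $G$ to be defined on all of~$\R$'' deserves one line of justification from condition~(2) rather than from condition~(1) itself: summing $p_n=\alpha\,\E\!\bigl[\binom{U}{n}\bigr]$ over~$n$ gives $\E[2^U]=1/\alpha<\infty$, so the power series for $G$ has radius at least~$2$, and then iterating $G(t+1)=G(t)/\alpha$ bootstraps the radius to infinity. Second, in the rational case your sentence ``$q\ge 3$ because the $q$ equally spaced angles cannot all fit in an arc of length~$\pi$'' runs two facts together: what you need is that $\theta_K/(2\pi)\in(0,1/4)$ forces $q\ge 5$ (in particular $q\ge 3$), and \emph{then}, for $q\ge 3$, the $q$ equally spaced points cannot all lie in the closed half--circle $\{\cos\ge 0\}$, so one of the finitely many values of $\cos(n\theta_K+\phi)$ is strictly negative and recurs periodically.
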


\begin{proof}
It is straightforward to see that the conditions are equivalent to the famous Rao-Rubin \cite{Rao} characterization of the Poisson distribution.
\end{proof}

\subsection{Open Questions}

We end with two open questions:  
\begin{enumerate}
\item Does the poset $(S, \subseteq)$ of Section \ref{ss.subsets} support a constant rate distribution?
\item More generally, does every standard discrete poset without maximal elements support a constant rate distribution?
\end{enumerate}


\begin{thebibliography}{99}
\bibitem{Azlarov}
TA Azlarov and NA Volodin. {\em Characterization Problems Associated with the Exponential Distribution}. Springer-Verlag, Berlin, 1986.

\bibitem{Berge}
C Berge.  {\em Principles of Combinatorics}, Academic Press, New York, 1971.

\bibitem{Gertxbakh}
IB Gertxbakh. {\em Statistical Reliability Theory}. Marcel Dekker, New York, 1989.

\bibitem{Halmos}
PR Halmos. {\em Measure Theory}. Springer-Verlag, Berlin, 1974.

\bibitem{Kamae}
T Kamae and U Krengel. ``Stochastic Partial Ordering''. {\em The Annals of Probability}, {\bf 6} (1978), 1044---1049.

\bibitem{Matheron}
G Matheron. {\em Random Sets and Integral Geometry}, Wiley, 1975.

\bibitem{Nachbin}
L Nachbin. {\em Topology and Order}. Van Nostrand, 1965.

\bibitem{Puri}
PS Puri and H Rubin. ``On a characterization of the family of distributions with constant multivariate failure rates''. {\em The Annals of Probability}, {\bf 2} (1974), 738---740.

\bibitem{Rao}
CR Rao and H Rubin, ``On a characterization of the Poisson distribution'' {\em Sankhya, Ser. A}, {\bf 41} (1964), 295---298.

\bibitem{Rowell}
GH Rowell and K Siegrist. ``Relative aging of distributions'', {\em Probability in the Engineering and Informational Sciences} {\bf 12} (1998), 469---478.

\bibitem{Siegrist}
K Siegrist. ``Exponential distributions on semigroups'', {\em Journal of Theoretical Probability} {\bf 7} (1994), 725---737

\bibitem{Siegrist2}
K Siegrist ``Decomposition of exponential distributions on positive semigroups'', {\em Journal of Theoretical Probability}. {\bf 19} (2006), 204--220.

\bibitem{Siegrist3}
K Siegrist ``Random, finite subsets with exponential distributions'', {\em Probability in the Engineering and Informational Sciences}, {\bf 21} (2007), 117---131

\bibitem{Siegrist4}
K Siegrist ``Exponential and gamma distributions on positive semigroups, with applications to Dirichlet distributions'', {\em Bernoulli} {\bf 13} (2007), 330---345.

\end{thebibliography}
\end{document}